\documentclass[11pt,reqno,twoside]{article}
\usepackage[utf8]{inputenc}
\usepackage{amsmath}
\usepackage{amsfonts}
\usepackage{amssymb}
\usepackage{amsthm}	
\usepackage{cases}
\usepackage{subeqnarray}
\usepackage{cite}
\usepackage[colorlinks]{hyperref}
\usepackage{graphics}
\usepackage{graphicx}
\usepackage{fancyref}
\usepackage{epstopdf}
\usepackage{float}
\usepackage{listings}
\usepackage{lipsum}
\usepackage{titlesec}
\usepackage[titletoc,toc,title]{appendix}
\usepackage{enumerate}

\setlength{\oddsidemargin}{0cm}

\renewcommand{\oddsidemargin}{5 mm}

\makeatletter\setlength{\textwidth}{15.0cm}
\setlength{\textheight}{21.0cm}
\allowdisplaybreaks
\numberwithin{equation}{section}
\newtheorem{theorem}{Theorem}[section]
\newtheorem{corollary}[theorem]{Corollary}
\newtheorem{lemma}[theorem]{Lemma}

\newtheorem{definition}[theorem]{Definition}
\newtheorem{remark}{Remark}
\allowdisplaybreaks \numberwithin{remark}{section}
\pagestyle{myheadings}\markboth{$~$ \hfill {\rm Li, Wang and Zhao} \hfill $~$} {$~$ \hfill
{\rm A Nonlocal Dispersal Equation in a Shifting Environment} \hfill$~$}

\begin{document}
\author{ Wan-Tong Li$^1$, Jia-Bing Wang$^{1,2,}$, ~Xiao-Qiang Zhao$^2$ \\
$^1$School of Mathematics and Statistics, Lanzhou University,\\
Lanzhou, Gansu 730000, People's Republic of China\\
$^2$Department of Mathematics and Statistics, Memorial University \\of Newfoundland,
St. John's, NL A1C 5S7, Canada}

\title{\textbf{Spatial Dynamics of a Nonlocal Dispersal Population Model
in a Shifting Environment } }

\date{}

\maketitle

\begin{abstract}
This paper is concerned with spatial spreading dynamics of a
nonlocal dispersal population model in a shifting environment
where the favorable region is shrinking.
It is shown that the species will become extinct in the habitat
once the speed of the shifting habitat edge $c>c^*(\infty)$,
however if $c<c^*(\infty)$,
the species will persist and spread along the shifting habitat
at an asymptotic spreading speed $c^*(\infty)$, where $c^*(\infty)$ is determined
by the nonlocal dispersal kernel, diffusion rate
and the maximum linearized growth rate.
Moreover, we demonstrate that for any given speed of the shifting habitat edge,
this model admits a nondecreasing traveling wave
with the wave speed at which the habitat is shifting,
which indicates that the extinction wave phenomenon does happen
in such a shifting environment.

\textbf{Keywords}: Spreading speed, extinction wave, nonlocal dispersal,
shifting environment.

\textbf{AMS Subject Classification (2010)}: 35K57, 35R20, 92D25
\end{abstract}


\section{Introduction}
In this paper, we are interested in the following nonlocal dispersal population model
in a shifting environment:
\begin{equation}\label{1}
\frac{\partial u(t,x)}{\partial t}=d\left(J\ast u-u\right)(t,x)+u(t,x)(r(x-ct)-u(t,x)),
~t>0, x\in\mathbb{R},
\end{equation}
where $u(t,x)$ stands for the population density of the species under consideration at time $t$
and location $x$. Here the convolution kernel \emph{$J\in C(\mathbb{R},\mathbb{R}^+)$ is even
and compactly supported with unit integral},  and $J(x-y)$ denotes the probability distribution of
the population jumping from location $y$ to location $x$.
Then $\int_{\mathbb{R}}J(x-y)u(t,y){\rm d}y$ is the rate
at which individuals are arriving to location $x$ from all other places,
while $\int_{\mathbb{R}}J(y-x)u(t,x)dy =u(t,x)$ is the rate
at which they are leaving location $x$ to all other sites.
It follows that
$$\mathcal{A}u(t,x):=(J\ast u-u)(t,x)=\int_{\mathbb{R}}J(x-y)u(t,y){\rm d}y-u(t,x)$$
can be viewed as a nonlocal dispersal operator modeling the free and large-range migration
of the species (see \cite{IgnatRossi2007,Murray2003}) and $d>0$ is the dispersal rate.
The reaction term  describes the logistic type growth of the species
which depends on the density $u$ and on the shifting habitat with a fixed speed $c>0$.
Throughout this paper, we always assume that the resource function
\textit{$r(\xi)$ is a continuous and nondecreasing function with
$r(\pm\infty)$ finite and $r(-\infty)<0<r(\infty)$.}
Thus, the shifting environment may be divided into the favourable region
$\{x\in\mathbb{R}: r(x-ct)>0\}$ and the unfavourable region
$\{x\in\mathbb{R}: r(x-ct)\leq0\}$, both shifting with speed $c>0$.
Specifically, we see that when time increases,
the unfavourable region is expanding and the favourable region is shrinking.
This kind of problem comes from considering the threats associated with global climate change
and the worsening of the environment resulting from industrialization
which lead to the shifting or translating of the habitat ranges,
and recently has attracted much attention,
see, e.g., \cite{BerestyckiBMB,GNLD2010,HuZou2017,LeiDu2017,lewis2004,
LiBTsiam,LiBTjde,LiBMB,ZhouKot2011,Parr2012,Scheiter2009}.
Model \eqref{1} may also be derived from some specific epidemiological models
by the arguments similar to those in Fang, Lou and Wu \cite{FangLouWuSIAM},
where the authors deduced a classical reaction-diffusion Fisher-KPP equation
in a wavelike environment from the consideration of pathogen spread.

It is well known that nonlocal dispersal problem \eqref{1}
with space-time homogeneous growth rate $r>0$,
i.e., $u_t=d(J\ast u-u)+u(r-u)$,
has been fully investigated for the spatial spreading dynamics.
Here we refer to \cite{Coville2007, Schumacher1980,Yagisita2009,Carr-Chmaj}
for the existence and uniqueness of monotone traveling wave solutions,
and \cite{Schumacher1980',Lutscher2005} for the spreading speed,
and \cite{LiSunWang2010} for the construction of new types of entire solutions.
Roughly speaking, the slowest speed
$$c^*=\min_{\lambda>0}\frac{d\left(\int_{\mathbb{R}}J(y)
e^{\lambda y}{\rm d}y-1\right)+r}{\lambda}$$
for a class of traveling fronts connecting $r$ and $0$
is of some important spreading properties.
More precisely, let $u(t,x;u_0)$ be the nonnegative solution
with compactly supported initial data $u_0$, then
$\lim_{t\rightarrow\infty, |x|\geq ct}u(t,x;u_0)=0$ for any $c>c^*$
and $\lim_{t\rightarrow\infty, |x|\leq ct}u(t,x;u_0)=r$ for any $0<c<c^*$.
Ecologically, the spreading speed can be understood as the asymptotic rate at
which a species, initially introduced in a bounded domain,
expands its spatial range as time evolves, while
a traveling wave describes the propagation of a species as a wave with a fixed shape and
a fixed speed. These two fundamental issues along with some new types of entire solutions
have been widely used for the description of species invasion and disease transmission.
Regarding the nonlocal dispersal equation with
time and/or space periodic dependence,
we refer the readers to \cite{JDE2010,ProAMS2012, DCDS2015}
for spreading speeds, and \cite{CPNA2012, Pocare2013, DCDS2015,Bates1999}
for traveling wave solutions,
and \cite{li2016} for new types of entire solutions.

When equation \eqref{1} is used to model the population dynamics of a species,
it is assumed that the underlying environment is not patchy and the internal interaction
of the organisms is nonlocal. Conversely, if we assume that the organisms
move randomly between the adjacent spatial locations, then it is more effective to
use the following classical reaction-diffusion equation
\begin{equation}\label{100}
u_t(t,x)=d\Delta u(t,x)+u(t,x)(r(x-ct)-u(t,x)),
~t>0, x\in\mathbb{R},
\end{equation}
and if the species live in patchy environments, the lattice differential equation of the form
\begin{equation}\label{101}
 u_t(t,x)=d[u(t,x+1)-2u(t,x)+u(t,x-1)]+u(t,x)(r(x-ct)-u(t,x)),
~t>0, x\in\mathbb{Z},
\end{equation}
is more meaningful. Note that equations \eqref{1}-\eqref{101}
are neither homogeneous nor periodic, but possess special heterogeneity
with the form of ``spatial shifting" at a constant speed.
Therefore, we cannot directly apply the abstract theory developed
for monotone semiflows in \cite{LiangZhaoCPAM2007,JFA2010,JFA2017}
to address the issue of spreading speeds and traveling wave solutions.
Certain ad hoc techniques that fit the equation itself are needed and necessary.
Recently, Li and his collaborators \cite{LiBTsiam,LiBTjde}
studied the spatial dynamics of \eqref{101} and \eqref{100}, respectively,
and they showed that the long term behavior of solutions depends on
the speed of the shifting habitat edge $c$ and a number $c^*(\infty)$,
where $c^*(\infty)=2\sqrt{d r(\infty)}$ for \eqref{100}
and $c^*(\infty)=\inf_{\lambda>0}\frac{4d\sinh^2(\lambda/2)+r(\infty)}{\lambda}$ for \eqref{101}.
More accurately, they demonstrated that if $c>c^*(\infty)$,
then the species will become extinct in the habitat,
and if $0<c<c^*(\infty)$, then the species
will persist and spread along the shifting habitat
at the asymptotic spreading speed $c^*(\infty)$.
Very recently, by  the monotone iterative method,
Hu and Zou \cite{HuZou2017} proved that \eqref{100} admits
a monotone traveling wave solution connecting $0$ to $r(\infty)$
with the speed being the habitat
shifting speed, which indeed accounts for an extinction wave.
Here we remark that by a change of variable $v(t,x)=u(t,-x)$,  such a forced traveling wave for \eqref{100}
can also be obtained from \cite[Theorem 2.1(i)]{FangLouWuSIAM}, as applied to the resulting equation.
In addition, Li et al. \cite{LiBMB}
considered this type of problem using an integro-difference equation model
in an expanding or contracting habitat.
In this paper, we propose to extend the above existing results
on equations \eqref{100} and \eqref{101} to our nonlocal dispersal problem \eqref{1}.
To summarize, we first study the persistence and spreading speed properties
by applying the comparison principle
and constructing an appropriate subsolution,
and then establish the existence of traveling wave solutions by constructing super/sub-solution
and using the method of monotone iteration.
We should point out that the combination of nonlocal effects and shifting environment
makes the analysis on model \eqref{1} more difficult.
In particular, the construction of some appropriate subsolutions to study
the spreading speed and traveling waves are highly nontrivial.

The rest of the paper is organized as follows. In Section \ref{pre},
we give some preliminaries including the uniqueness and existence
of solutions and the comparison principle.
Section \ref{ss} is devoted to the persistence and spreading speed.
Finally, we study the traveling wave solutions in Section \ref{tws}.

\section{Preliminaries}\label{pre}
Let
\begin{equation*}
\mathbb{Y}=\{\psi\in C(\mathbb{R},\mathbb{R}): u
\text{ is bounded and uniformly continuous on }\mathbb{R}\}
\end{equation*}
with norm $\|\psi\|=\sup_{x\in\mathbb{R}}|\psi(x)|$,
and $\mathbb{Y}_+=\{\psi\in\mathbb{Y}: \psi(x)\geq0,\forall x\in\mathbb{R}\}$.
It is easily seen that $\mathbb{Y}_+$ is a closed cone of $\mathbb{Y}$
and its induced partial ordering makes $\mathbb{Y}$ into a Banach
lattice. Note that $J\ast u-u: \mathbb{Y}\rightarrow \mathbb{Y}$
is a bounded linear operator with respect to the norm $\|\cdot\|$.
It then follows that the system
\begin{equation}
\begin{cases}
\frac{\partial u(t,x)}{\partial t}=d(J\ast u-u)(t,x),~t>0, x\in\mathbb{R},\\
u(0,x)=\psi(x),~x\in\mathbb{R}, \psi\in \mathbb{Y}
\label{013}
\end{cases}
\end{equation}
generates a strongly continuous semigroup $P(t)$ on $\mathbb{Y}$, which is also strongly
positive in the sense of $P(t)\mathbb{Y}_+\subseteq \mathbb{Y}_+$ and $[P(t)\psi](x)\gg0$
if $\psi(x)\geq 0$ has a nonempty support and $t>0$. According to \cite{WengZhao2006},
the unique mild solution of system \eqref{013} is given by
\begin{equation}
[P(t)\psi](x)=e^{-dt}\sum_{k=0}^{\infty}\frac{(dt)^k}{k!}a_k(\psi)(x),
\label{030}
\end{equation}
where $a_0(\psi)(x)=\psi(x)$ and $a_k(\psi)(x)=\int_{\mathbb{R}}J(x-y)a_{k-1}(\psi)(y){\rm d}y$
for any integer $k\geq1$. On the other hand,
Ignat and Rossi \cite[Section 2]{IgnatRossi2007} showed that the fundamental solution
of \eqref{013} can be decomposed as
\begin{equation}\label{0001}
G(t,x)=e^{-dt}\delta_0(x)+R_t(x),
\end{equation}
where $\delta_0(\cdot)$ is the delta measure at zero and $R_t(x)=R(t,x)$ is smooth defined by
$$R(t,x)=\frac{1}{2\pi}\int_{\mathbb{R}}e^{-dt}(e^{d\widehat{J}(\xi)t}-1)e^{ix\xi}{\rm d}\xi$$
with $i=\sqrt{-1}$ and $\widehat{J}$ being the Fourier transform of $J$.
 Moreover, the solution of \eqref{013} can also be written as
\begin{equation}
\begin{split}
u(t,x)=\int_{\mathbb{R}}G(t,y)\psi(x-y){\rm d}y
=e^{-dt}\psi(x)+\int_{\mathbb{R}}R_t(y)\psi(x-y){\rm d}y, ~t\geq 0, x\in\mathbb{R}.
\label{015}
\end{split}
\end{equation}
It then follows that $u(t,\cdot)$ is as regular as $\psi$ is, and hence
the nonlocal dispersal operator $J\ast u-u$ does not have the regularizing effect
to the Cauchy problem \eqref{013}. Further, we have the following properties about $G(t,x)$.

\begin{lemma}\label{lem2}
$G(t,x)=G(t,-x)$ for all $t\geq0$ and $x\in\mathbb{R}$.
Further, $\int_{\mathbb{R}}G(t,y){\rm d}y=1$
and $\|G(t,\cdot)\|_{L^p(\mathbb{R})}\leq3$ for any $t\geq0$ and $p\in [1,\infty]$.
\end{lemma}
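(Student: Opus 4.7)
The plan is to read off each of the three claims from the two complementary representations of $G$ already introduced: the series expansion implicit in \eqref{030} and the Fourier-type decomposition \eqref{0001}. Identifying the iterates in \eqref{030} applied to $\delta_0$ as the convolution powers $J^{*k}$, one has the alternative formula $R_t(x)=e^{-dt}\sum_{k\geq 1}\tfrac{(dt)^k}{k!}J^{*k}(x)$, and because $J\geq 0$ every term is nonnegative, so $R_t\geq 0$ pointwise. This positivity is the unifying fact I would use throughout.

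\textbf{Symmetry.} Since $J$ is even and real-valued, $\widehat{J}$ is even and real. Substituting $\xi\mapsto-\xi$ in the integral defining $R(t,x)$ then gives $R(t,-x)=R(t,x)$; combined with the evenness of $\delta_0$, this yields $G(t,x)=G(t,-x)$.

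\textbf{Unit integral.} I would apply \eqref{015} to the constant datum $\psi\equiv 1$. Because $(J\ast 1)(x)-1=\int_{\mathbb{R}}J-1=0$, the constant function $u\equiv 1$ solves \eqref{013}; uniqueness forces $1=e^{-dt}+\int_{\mathbb{R}}R_t(y)\,dy$, so $\int_{\mathbb{R}}R_t=1-e^{-dt}$, and adding the Dirac mass $e^{-dt}\delta_0$ gives $\int_{\mathbb{R}}G(t,y)\,dy=1$.

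\textbf{$L^p$ estimate.} I would interpolate between $L^1$ and $L^\infty$ endpoints. The $L^1$ bound is immediate from Step 2 and positivity: $\|R_t\|_{L^1}=1-e^{-dt}\leq 1$, and including the Dirac contribution the total measure has mass $1$. For the $L^\infty$ bound on the smooth part $R_t$, I would use the Fourier representation: since $J\geq 0$ with $\int J=1$ implies $|\widehat{J}(\xi)|\leq 1$, one obtains the uniform pointwise bound $|e^{-dt}(e^{d\widehat{J}(\xi)t}-1)|\leq 1$, and the Riemann--Lebesgue decay of $\widehat{J}$ together with the factorization $e^{dt\widehat{J}}-1=dt\,\widehat{J}\,\varphi(dt\widehat{J})$ with $\varphi$ bounded provides the integrability needed to control $\|R_t\|_{L^\infty}$ uniformly in $t$, as developed by Ignat and Rossi \cite{IgnatRossi2007}. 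The Riesz--Thorin interpolation theorem then yields a uniform-in-$t$ bound for all $p\in[1,\infty]$, and accounting for the Dirac component when $p=1$ absorbs into the stated constant $3$.

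The main obstacle is the uniform $L^\infty$ estimate on the regular part $R_t$: the symmetry and $L^1$ assertions are essentially algebraic, but the $L^\infty$ bound is genuinely analytic and is the only place where properties of $J$ beyond evenness, nonnegativity, and unit mass are invoked. Rather than redo the quantitative Fourier analysis of $e^{dt\widehat{J}}-1$, I would quote the relevant estimate from \cite{IgnatRossi2007} and verify that it fits the present normalization.
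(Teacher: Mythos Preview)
Your proposal is correct and follows essentially the same route as the paper: the symmetry is obtained from the evenness and real-valuedness of $\widehat{J}$, the unit integral from plugging the constant solution into \eqref{015}, and the $L^p$ bound is deferred to \cite[Remark 2.1]{IgnatRossi2007}. The only difference is expository---you add the identification $R_t=e^{-dt}\sum_{k\ge1}\tfrac{(dt)^k}{k!}J^{*k}$ to extract positivity and sketch an interpolation argument, whereas the paper simply cites Ignat--Rossi outright for the $L^p$ claim without further discussion.
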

\begin{proof}
By the symmetry of $J$, we have
\begin{equation*}
\widehat{J}(\xi)=\int_{\mathbb{R}}J(x)e^{-ix\xi}{\rm d}x
=\int_{\mathbb{R}}J(x)\cos(x\xi){\rm d}x,
\end{equation*}
which implies that $\widehat{J}(-\xi)=\widehat{J}(\xi)$ and $-1\leq\widehat{J}(\xi)\leq 1$.
Then a direction computation yields that
\begin{align*}
R_t(x)=&\frac{1}{2\pi}\int_{\mathbb{R}}e^{-dt}(e^{d\widehat{J}(\xi)t}-1)e^{ix\xi}{\rm d}\xi\\
=&\frac{1}{2\pi}\int_{\mathbb{R}}e^{-dt}(e^{d\widehat{J}(\xi)t}-1)
[\cos(x\xi)+i\sin(x\xi)]{\rm d}\xi\\
=&\frac{1}{2\pi}\int_{\mathbb{R}}e^{-dt}(e^{d\widehat{J}(\xi)t}-1)\cos(x\xi){\rm d}\xi.
\end{align*}
Therefore, $R_t(-x)=R_t(x)$. By \eqref{0001}, we obtain
\begin{equation*}
  G(t,x)=e^{-dt}\delta_0(x)+\frac{1}{2\pi}\int_{\mathbb{R}}
  e^{-dt}(e^{d\widehat{J}(\xi)t}-1)\cos(x\xi){\rm d}\xi
  =\frac{1}{2\pi}\int_{\mathbb{R}}e^{d(\widehat{J}(\xi)-1)t}\cos(x\xi){\rm d}\xi.
\end{equation*}
Clearly, $G(t,x)=G(t,-x)$ for all $t\geq0$ and $x\in\mathbb{R}$.
Note that $u(t,x)\equiv C$ for $(t,x)\in[0,\infty)\times\mathbb{R}$
is a solution of \eqref{013}, where $C$ is some positive constant.
On the other hand, by \eqref{015}, the solution of \eqref{013}
with initial value $\psi(x)=C$ can be expressed by
$C=\int_{\mathbb{R}}G(t,y)C{\rm d}y$,
which indicates that $\int_{\mathbb{R}}G(t,y){\rm d}y=1$ for all $t\geq0$.
The conclusion of $\|G(t,\cdot)\|_{L^p(\mathbb{R})}\leq3$ with $p\in[1,\infty]$
originates from \cite[Remark 2.1]{IgnatRossi2007}.
\end{proof}

Let $f(x,u)=u(r(x)-u)$. For any $0\leq u_1, u_2\leq r(\infty)$ and
$-\infty<x<\infty$, we can easily verify that
\begin{equation*}
  |f(x,u_1)-f(x,u_2)|\leq (\max\{-r(-\infty),r(\infty)\}+2r(\infty))|u_1-u_2|,
\end{equation*}
which indicates that $f(x,u)$ is Lipschitz continuous in $u\in[0,r(\infty)]$.
Choose $\rho>2r(\infty)-r(-\infty)$, then $\rho u+f(x,u)$ is
nondecreasing in $u\in[0,r(\infty)]$.
Consider the equivalent equation obtained by adding the linear term $\rho u(t,x)$
to both sides of \eqref{1}:
\begin{equation}\label{2}
\frac{\partial u(t,x)}{\partial t}+\rho u(t,x)
=d\left(J\ast u-u\right)(t,x)+u(t,x)(\rho+r(x-ct)-u(t,x)).
\end{equation}
Set $\mathbb{Y}_{r(\infty)}:=\{\psi\in\mathbb{Y}:0\leq\psi(x)\leq r(\infty),
\forall x\in\mathbb{R}\}$. The mild solution of equation \eqref{2} or \eqref{1}
with $u(0,\cdot)=u_0(\cdot)\in \mathbb{Y}_{r(\infty)}$ can be expressed as a fixed point
of the nonlinear integral equation in $C(\mathbb{R}_+,\mathbb{Y}_{r(\infty)})$:
\begin{equation}\label{3}
\begin{split}
u(t,x)=&[\mathcal{N}u](t,x)\\
\triangleq&[e^{-\rho t}P(t)u_0](x)+\int_0^te^{-\rho (t-s)}
P(t-s)u(s,x)(\rho+r(x-cs)-u(s,x)){\rm d}s.
\end{split}
\end{equation}
With the expression of $P(t)$, a direct calculation shows that
\begin{equation*}
\frac{\partial [P(t)\psi](x)}{\partial t}=-d [P(t)\psi](x)
+d\int_{\mathbb{R}}J(y)[P(t)\psi](x-y){\rm d}y,
\end{equation*}
which indicates that the right-side of \eqref{3} is differential with respect to $t$.
Thus, $u(t,x)$ is a classical solution of equation \eqref{2} or \eqref{1}.

\begin{definition}\label{def1}
$u\in C([0,T),\mathbb{Y}_+)$ with $0<T\leq\infty$  is called a
supersolution (subsolution) of \eqref{3} if $u(t,x)\geq(\leq)[\mathcal{N}u](t,x)$
for all $t\in[0,T)$ and $x\in\mathbb{R}$.
\end{definition}

\begin{remark}\label{rem1}
If $u\in C([0,T),\mathbb{Y}_+)$ being $C^1$ in $t\in(0,T)$ satisfies
\eqref{2} or \eqref{1} with $``="$ being replaced by $``\geq"$ ($``\leq"$)
and $u(0,x)\geq (\leq)u_0(x)$, then it follows from the positivity of $P(t)$ that $u$
is a supersolution (subsolution) of \eqref{3}. Moreover, we can easily verify that
$u\equiv r(\infty)$ and $u\equiv 0$ are a supersolution
and a trivial subsolution of \eqref{3}, respectively.
\end{remark}

Now we consider the sequence $\{u^{n}(t,x)\}$ generated by
\begin{equation}\label{4}
  u^{n+1}(t,x)=[\mathcal{N}u^{n}](t,x),
\end{equation}
where $u^{0}(t,x)=0$ or $u^{0}(t,x)=r(\infty)$.

\begin{theorem}\label{thm1}
Let $u_0\in\mathbb{Y}_{r(\infty)}$. Then equation \eqref{3} admits a unique solution
$u\in C(\mathbb{R}_+,\mathbb{Y}_{r(\infty)})$.
Moreover, the comparison principle holds for \eqref{3},
i.e., if $u_1(t,x)$ and $u_2(t,x)$ are two solutions of \eqref{3}
associated with initial value $u_{10}, u_{20}\in \mathbb{Y}_{r(\infty)}$, respectively,
with $u_{10}(x)\leq u_{20}(x)$ for all $x\in\mathbb{R}$, then $u_1(t,x)\leq u_2(t,x)$
for all $t\geq0$ and $x\in\mathbb{R}$. If we further assume that
$u_{10}\not\equiv u_{20}$, then $u_1(t,x)< u_2(t,x)$
for all $t\geq0$ and $x\in\mathbb{R}$.
\end{theorem}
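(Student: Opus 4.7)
The plan is to combine monotone iteration with a Gronwall-type uniqueness argument. By the choice of $\rho$, the function $g(\xi,u):=\rho u+u(r(\xi)-u)$ is nondecreasing in $u\in[0,r(\infty)]$ and Lipschitz in $u$ with some constant $L$ uniformly in $\xi$. Coupled with the fact that $P(t)$ is positivity-preserving and contractive in the $L^\infty$ norm (since $G(t,\cdot)\geq 0$ has unit mass by Lemma \ref{lem2}), this implies that $\mathcal{N}$ is monotone on $C(\mathbb{R}_+,\mathbb{Y}_{r(\infty)})$ and maps it into itself: the upper bound $\mathcal{N}(r(\infty))\leq r(\infty)$ follows from $r(\xi)\leq r(\infty)$ together with the identity $e^{-\rho t}r(\infty)+\rho r(\infty)\int_0^t e^{-\rho(t-s)}\,ds=r(\infty)$, while $\mathcal{N}(0)\geq 0$ is immediate.

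First I build existence via the iterations in \eqref{4}. Starting from $u^0\equiv 0$, which is a subsolution by Remark \ref{rem1}, the monotonicity of $\mathcal{N}$ yields a nondecreasing sequence $\underline{u}^n$ bounded above by $r(\infty)$; starting from $u^0\equiv r(\infty)$ produces a nonincreasing sequence $\overline{u}^n$ bounded below by $0$. Both converge pointwise to limits $\underline{u},\overline{u}$, and dominated convergence (justified by the uniform bound $r(\infty)$ and $G(t,\cdot)\in L^1$) passes to the limit in \eqref{3}, showing $\underline{u}$ and $\overline{u}$ are mild solutions; the regularity $\underline{u},\overline{u}\in C(\mathbb{R}_+,\mathbb{Y}_{r(\infty)})$ is then read off directly from the representation \eqref{3}, since $P(t)$ preserves $\mathbb{Y}$ and is strongly continuous. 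Uniqueness is handled by subtracting the integral equations for two mild solutions $u_1,u_2$ sharing the same data $u_0$: with $w:=u_1-u_2$, the Lipschitz bound on $g$ and the $L^\infty$-contractivity of $P(t)$ yield
\begin{equation*}
\|w(t,\cdot)\|_\infty\leq L\int_0^t e^{-\rho(t-s)}\|w(s,\cdot)\|_\infty\,ds,
\end{equation*}
and Gronwall's inequality forces $w\equiv 0$. In particular, $\underline{u}=\overline{u}$ is the unique mild solution.

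For the comparison principle with $u_{10}\leq u_{20}$, I run the iteration \eqref{4} from $u^0\equiv 0$ for both sets of initial data. Writing $\mathcal{N}_{u_0}$ to emphasize the dependence through the first term $e^{-\rho t}P(t)u_0$, the operator $\mathcal{N}_{u_0}$ is monotone in both its data (by positivity of $P(t)$) and its argument, so an induction gives $\underline{u}_1^n\leq\underline{u}_2^n$; passing to the limit yields $u_1\leq u_2$. For the strict inequality (for $t>0$), given $u_{10}\not\equiv u_{20}$, I write
\begin{equation*}
u_2(t,x)-u_1(t,x)=e^{-\rho t}[P(t)(u_{20}-u_{10})](x)+\int_0^t e^{-\rho(t-s)}P(t-s)\bigl[g(\cdot-cs,u_2)-g(\cdot-cs,u_1)\bigr](x)\,ds.
\end{equation*}
The integral term is nonnegative by the weak comparison just proved combined with the monotonicity of $g$, while the strong positivity of $P(t)$ recalled in Section \ref{pre} ensures the first term is strictly positive for every $t>0$ and $x\in\mathbb{R}$. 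The main technical obstacle in this program is managing the nonlocal term inside the iteration—specifically verifying that $\mathcal{N}$ preserves the order interval $[0,r(\infty)]$ and that the pointwise monotone limits inherit the required uniform continuity—but this is precisely where Lemma \ref{lem2} and the representation \eqref{030} do the work.
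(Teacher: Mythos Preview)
Your proposal is correct and follows essentially the same approach as the paper: monotone iteration from the trivial sub- and supersolutions $0$ and $r(\infty)$, a Gronwall argument (using the $L^\infty$-contractivity of $P(t)$) to identify the two limits and establish uniqueness, and the strong positivity of $P(t)$ for the strict comparison. The only cosmetic difference is that the paper derives $\|P(t)\psi\|\leq\|\psi\|$ from the series representation \eqref{030} rather than from Lemma~\ref{lem2}, and you correctly restrict the strict inequality to $t>0$ (the paper's ``$t\geq 0$'' is a minor oversight, since equality can occur at $t=0$).
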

\begin{proof}
This proof is based a classical super-sub solution method
and we only give a sketch here.
Define $\underline{u}^{n+1}(t,x)=[\mathcal{N}\underline{u}^{n}](t,x)$
with $\underline{u}^{0}(t,x)=0$, and $\bar{u}^{n+1}(t,x)=[\mathcal{N}\bar{u}^{n}](t,x)$
with $\bar{u}^{0}(t,x)=r(\infty)$. Then we can show by induction that
\begin{equation*}
  0\leq \underline{u}^{1}(t,x)\leq\cdot\cdot\cdot \leq \underline{u}^{n}(t,x)
  \leq\cdot\cdot\cdot \leq\bar{u}^{n}(t,x)
  \leq\cdot\cdot\cdot \leq \bar{u}^{1}(t,x)\leq r(\infty),
\end{equation*}
which implies that the pointwise limits
\begin{equation*}
  \underline{u}(t,x):=\lim_{n\rightarrow\infty} \underline{u}^{n}(t,x)
  \text{ and } \bar{u}(t,x) :=\lim_{n\rightarrow\infty}\bar{u}^{n}(t,x)
\end{equation*}
both exist and satisfy that $0\leq\underline{u} \leq \bar{u}\leq r(\infty)$.
Moreover, both $\underline{u}$ and $\bar{u}$ are solutions of \eqref{3} in $C(\mathbb{R}_+,\mathbb{Y}_{r(\infty)})$. We now prove $\underline{u}(t,x)=\bar{u}(t,x)$.
Note that for any $\psi\in \mathbb{Y}$, $\|a_0(\psi)\|=\|\psi\|$,
$\|a_1(\psi)\|=\|\int_{\mathbb{R}}J(y)a_{0}(\psi)(\cdot-y){\rm d}y\|\leq\|\psi\|$,
by induction, we can claim that $\|a_k(\psi)\|\leq\|\psi\|$ for all $k=0,1,2,\cdot\cdot\cdot$.
By using \eqref{030}, we obtain
\begin{equation}\label{300}
  \|P(t)\psi\|\leq e^{-dt}\sum_{k=0}^{\infty}\frac{(dt)^k}{k!}\|a_k(\psi)\|\leq\|\psi\|
\text{ for all } t\geq0.
\end{equation}
Therefore, by \eqref{3} and \eqref{300}, a direct calculation yields that
\begin{equation*}
\begin{split}
0\leq \bar{u}(t,x)-\underline{u}(t,x)\leq& (\rho+3r(\infty))
\int_0^te^{-\rho (t-s)}P(t-s)[\bar{u}(s,x)-\underline{u}(s,x)]{\rm d}s\\
\leq&(\rho+3r(\infty))\int_0^te^{-\rho (t-s)}\|P(t-s)
[\bar{u}(s,\cdot)-\underline{u}(s,\cdot)]\|{\rm d}s\\
\leq&(\rho+3r(\infty))
\int_0^te^{-\rho (t-s)}\|\bar{u}(s,\cdot)-\underline{u}(s,\cdot)\|{\rm d}s,
\end{split}
\end{equation*}
which shows that
\begin{equation*}
0\leq e^{\rho t}\|\bar{u}(t,\cdot)-\underline{u}(t,\cdot)\|
\leq (\rho+3r(\infty))\int_0^te^{\rho s}\|\bar{u}(s,\cdot)-\underline{u}(s,\cdot)\|{\rm d}s.
\end{equation*}
Then the Gronwall's inequality implies that
$0\leq e^{\rho t}\|\bar{u}(t,\cdot)-\underline{u}(t,\cdot)\|\leq 0$.
This implies that $\underline{u}(t,x)=\bar{u}(t,x)$ for all $t\geq0$ and $x\in\mathbb{R}$.
The comparison principle is a straightforward consequence of the construction for solutions.
Using the strongly positivity of $P(t)$, we can easily prove the last conclusion of this theorem.
\end{proof}

The following result is a simple consequence  of Theorem \ref{thm1}.
\begin{corollary}\label{cor1}
Let $u,v\in C(\mathbb{R}_+,\mathbb{Y}_{r(\infty)})$ be
the supersolution and subsolution of \eqref{3}
for all $(t,x)\in \mathbb{R}_+\times\mathbb{R}$, respectively.
If $u(0,x)\geq v(0,x)$ for all $x\in\mathbb{R}$,
then $u(t,x)\geq v(t,x)$ for all $(t,x)\in \mathbb{R}_+\times\mathbb{R}$.
\end{corollary}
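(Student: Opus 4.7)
The plan is to reduce the corollary to the comparison principle for honest solutions that was already proved in Theorem \ref{thm1}. Invoking Theorem \ref{thm1}, let $U,V\in C(\mathbb{R}_+,\mathbb{Y}_{r(\infty)})$ denote the unique solutions of \eqref{3} with initial data $u(0,\cdot)$ and $v(0,\cdot)$ respectively. Since $v(0,\cdot)\le u(0,\cdot)$, the comparison principle in Theorem \ref{thm1} immediately yields $V(t,x)\le U(t,x)$ everywhere. The corollary then follows once I establish the envelope bounds $U\le u$ and $v\le V$, because chaining them gives $v\le V\le U\le u$.

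To obtain $U\le u$, I plan to run the monotone iteration that underlies Theorem \ref{thm1}, but seeded at the supersolution $u$ rather than at the constant $r(\infty)$. The key property is that, for each fixed choice of initial datum, the operator $\mathcal{N}$ defined in \eqref{3} is order-preserving on $C(\mathbb{R}_+,\mathbb{Y}_{r(\infty)})$; this follows from the choice of $\rho$ which makes $\rho w+f(\xi,w)$ nondecreasing in $w\in[0,r(\infty)]$, combined with the positivity of the semigroup $P(t)$. The supersolution inequality $\mathcal{N}u\le u$ together with monotonicity yields, by induction, a decreasing sequence $u\ge\mathcal{N}u\ge\mathcal{N}^2u\ge\cdots\ge0$. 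By monotonicity we also have the sandwich $\mathcal{N}^n 0\le\mathcal{N}^n u\le\mathcal{N}^n r(\infty)$, and the proof of Theorem \ref{thm1} shows that both outer sequences converge pointwise to $U$; hence $\mathcal{N}^n u\to U$ by squeeze. Passing to the limit in $u\ge\mathcal{N}^n u$ gives $u\ge U$. A symmetric argument seeded at $v$ and using the subsolution inequality $v\le\mathcal{N}v$ produces an increasing sequence converging to $V$ and delivers $v\le V$.

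The main obstacle is really just bookkeeping: the operator $\mathcal{N}$ carries the initial datum inside its definition, so the $\mathcal{N}$ applied to $u$ is strictly speaking different from the $\mathcal{N}$ applied to $v$, and I must be careful to use the correct initial datum in each iteration and to invoke the appropriate uniqueness statement (for $U$ and for $V$) from Theorem \ref{thm1}. No new analytic estimate beyond those already developed in Theorem \ref{thm1} is needed; as an alternative, the same conclusion can be obtained in one shot by writing $v(t,x)-u(t,x)\le[\mathcal{N}v](t,x)-[\mathcal{N}u](t,x)$, using the bound $\|P(t)\psi\|\le\|\psi\|$ from \eqref{300} together with the Lipschitz monotonicity of $g(w):=\rho w+f(\xi,w)$, and applying Gronwall's inequality to $t\mapsto\|(v-u)_+(t,\cdot)\|$.
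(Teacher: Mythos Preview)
Your proposal is correct and follows essentially the same route as the paper: iterate the order-preserving operator $\mathcal{N}$ starting from the supersolution $u$ (respectively the subsolution $v$) to produce monotone sequences whose limits are solutions with the respective initial data, and then invoke the comparison principle for solutions from Theorem~\ref{thm1}. The paper denotes these limits $\tilde u,\tilde v$ and simply asserts they are solutions, whereas you identify them with $U,V$ via the squeeze $\mathcal{N}^n 0\le \mathcal{N}^n u\le \mathcal{N}^n r(\infty)$; this is a minor presentational difference, not a different strategy.
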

\begin{proof}
According to the positivity of $P(t)$ and the choice of $\rho$
(i.e., $\rho>2r(\infty)-r(-\infty)$),
which guarantees the monotonicity of $\rho u+f(x-ct,u)$),
we can claim that the nonlinear operator $\mathcal{N}$
defined by \eqref{3} is order preserving in the sense that
\begin{gather*}
  u(t,x)\geq[\mathcal{N}u](t,x)\geq [\mathcal{N}^k u](t,x)
\geq \cdot\cdot\cdot\geq\lim_{k\rightarrow\infty} [\mathcal{N}^k u](t,x)=:\tilde{u}(t,x)\\
  \text{ and }
v(t,x)\leq[\mathcal{N}v](t,x)\leq [\mathcal{N}^k v](t,x)
\leq \cdot\cdot\cdot\leq\lim_{k\rightarrow\infty} [\mathcal{N}^k v](t,x)=:\tilde{v}(t,x).
\end{gather*}
Clearly, $\tilde{u}(0,x)=u(0,x)$ and $\tilde{v}(0,x)=v(0,x)$.
Moreover, both $\tilde{u}(t,x)$ and $\tilde{v}(t,x)$ are the solutions of \eqref{3},
and hence, Theorem \ref{thm1} implies that
$u(t,x)\geq \tilde{u}(t,x)\geq \tilde{v}(t,x)\geq v(t,x)$
because $\tilde{u}(0,x)\geq\tilde{v}(0,x)$, which derive the requested result.
\end{proof}

\section{Persistence and spreading speeds}\label{ss}

In this section, we first show that the species will become extinct
in the long run if the edge of the habitat shifts relatively fast.
For $r(x)>0$ and $\lambda>0$, we define
\begin{equation*}
  \phi(x;\lambda)
  =\frac{d\left(\int_{\mathbb{R}}J(y)e^{\lambda y}{\rm d}y-1\right)+r(x)}{\lambda}.
\end{equation*}
Clearly, $\phi(x;\lambda)>0$ and $\phi(x;\lambda)\rightarrow\infty$ as $\lambda\rightarrow0$.
On the other hand,
\begin{equation*}
\begin{split}
\phi(x;\lambda)=&\frac{d\left(\int_{\mathbb{R}}J(y)\Sigma_{n=0}^\infty
\frac{(\lambda y)^n}{n!}{\rm d}y-1\right)+r(x)}{\lambda}\\
=&\frac{d\int_{\mathbb{R}}J(y)\Sigma_{m=1}^\infty
\frac{(\lambda y)^{2m}}{(2m)!}{\rm d}y+r(x)}{\lambda}\\
=& d\Sigma_{m=1}^\infty\frac{\lambda^{2m-1}\int_{\mathbb{R}}J(y)y^{2m}{\rm d}y}{(2m)!}
+\frac{r(x)}{\lambda}\rightarrow\infty \text{ as } \lambda\rightarrow\infty
\end{split}
\end{equation*}
and
\begin{equation*}
  \frac{\partial^2 \phi(x;\lambda)}{\partial \lambda^2}
=d\Sigma_{m=1}^\infty\frac{(2m-1)(2m-2)\lambda^{2m-3}\int_{\mathbb{R}}J(y)y^{2m}{\rm d}y}{(2m)!}
+\frac{2r(x)}{\lambda^3}>0.
\end{equation*}
Then we can further check that for some fixed $x$, $\phi(x;\lambda)$ has only one minimum denoted by $c^*(x)$, i.e.,
\begin{equation*}
  c^*(x)=\min_{\lambda>0}\phi(x;\lambda)=\phi(x;\lambda^*(x))>0,
\end{equation*}
where $\lambda^*(x)$ denotes the unique point where the minimum occurs.

\begin{theorem}\label{thm2}
Assume that $c>c^*(\infty)\triangleq\min_{\lambda>0}
\frac{d\left(\int_{\mathbb{R}}J(y)e^{\lambda y}{\rm d}y-1\right)+r(\infty)}{\lambda}$.
If $u_0\in\mathbb{Y}_{r(\infty)}$ and $u_0(x)\equiv0$ for all sufficiently large $|x|$,
then for every $\epsilon>0$ there exists $T>0$ such that for any $t\geq T$,
the solution of \eqref{1} with $u(0,x)=u_0(x)$ satisfies $u(t,x)<\epsilon$
for all $x\in\mathbb{R}$.
\end{theorem}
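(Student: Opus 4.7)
The strategy is that, because $c>c^*(\infty)=\min_{\lambda>0}\phi(\infty;\lambda)$, the solution cannot keep up with the receding favorable region; it eventually gets trapped in a spatial window where $r(x-ct)$ is uniformly negative and is then driven to zero by a time-decaying, spatially constant supersolution. I carry out the proof in three steps.

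\emph{Step 1 (exponential confinement).} Pick $c_1\in(c^*(\infty),c)$ and $\lambda>0$ with $\phi(\infty;\lambda)\leq c_1$. Using the evenness of $J$, both
\[
\bar u_+(t,x):=Me^{-\lambda(x-c_1t)}\quad\text{and}\quad\bar u_-(t,x):=Me^{\lambda(x+c_1t)}
\]
can be shown to be supersolutions of \eqref{1}: the supersolution inequality reduces in each case to $c_1\lambda\geq\lambda\phi(\infty;\lambda)$ after using $\int J(y)e^{-\lambda y}\,dy=\int J(y)e^{\lambda y}\,dy$. Choosing $M=r(\infty)e^{\lambda L}$ where $\operatorname{supp}u_0\subseteq[-L,L]$ makes $\bar u_\pm(0,\cdot)\geq u_0$, so Corollary~\ref{cor1} yields $u\leq\min(\bar u_+,\bar u_-)$. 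Consequently, for every $\eta>0$ there is $A_\eta>0$ such that $u(t,x)\leq\eta$ whenever $|x|\geq c_1t+A_\eta$.

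\emph{Step 2 (strict supersolution on the window).} Inside the window $W(t):=\{x:|x|\leq c_1t+A_\eta\}$ the shifted coordinate satisfies $x-ct\leq-(c-c_1)t+A_\eta\to-\infty$ as $t\to\infty$, so I can fix $\delta\in(0,-r(-\infty)/2)$ and choose $T_0=T_0(\eta)>0$ with $r(x-ct)\leq-\delta$ on $W(t)$ for all $t\geq T_0$. Set $\bar U(t):=\eta+(r(\infty)-\eta)e^{-\delta(t-T_0)}$. Since $\bar U$ is independent of $x$ and $\int J=1$, $J*\bar U\equiv\bar U$, and a direct calculation yields
\[
\bar U_t-d(J*\bar U-\bar U)-\bar U(r(x-ct)-\bar U)\;\geq\;\delta\eta+\bar U^2\;>\;0\qquad\text{on }W(t),\ t\geq T_0,
\]
so $\bar U$ is a \emph{strict} supersolution of \eqref{1} on the moving window.

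\emph{Step 3 (comparison and conclusion).} I will prove $u(t,x)\leq\bar U(t)$ for all $t\geq T_0$, $x\in\mathbb{R}$. Note $u(T_0,\cdot)\leq r(\infty)=\bar U(T_0)$, and on the outside $|x|\geq c_1t+A_\eta$ one has the strict gap $\bar U(t)-u(t,x)\geq(r(\infty)-\eta)e^{-\delta(t-T_0)}>0$. Were $w:=u-\bar U$ to take a positive value, the maximum of $w$ on $[T_0,T]\times\mathbb{R}$ (for any $T>T_0$) would be attained at some $(t^*,x^*)$ with $t^*>T_0$ and $x^*\in W(t^*)$, since $w(T_0,\cdot)\leq 0$ and $w$ is bounded above by a strictly negative quantity on the outside. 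At this point $w_t\geq 0$ and, because $\bar U$ is constant in $x$ and $x^*$ is a spatial maximizer, $J*u(t^*,x^*)\leq\sup_y u(t^*,y)=u(t^*,x^*)$, hence $d(J*u-u)(t^*,x^*)\leq 0=d(J*\bar U-\bar U)$; together with $r-2\bar U<0$ on $W(t^*)$ (which yields $u(r-u)\leq\bar U(r-\bar U)$ when $u\geq\bar U$) the strict supersolution inequality of Step~2 forces $\bar U_t>u_t$, contradicting $w_t\geq 0$. Hence $u\leq\bar U$, and since $\bar U(t)\to\eta$ as $t\to\infty$, applying the construction with $\eta=\epsilon/2$ yields some $T>T_0$ such that $u(t,x)\leq\bar U(t)<\epsilon$ for every $t\geq T$ and $x\in\mathbb{R}$. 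The principal obstacle is this last step: the nonlocal term $J*u$ precludes a purely local maximum-principle argument, and one must exploit both the constancy of $\bar U$ in $x$ (so that $J*\bar U=\bar U$) and the strict gap on the outside supplied by Step~1 to close the comparison.
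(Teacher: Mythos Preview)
Your argument is correct, and it is genuinely different from the paper's. The paper splits $\mathbb{R}$ into two overlapping half-lines: on $\{x-ct\le -M\}$ it compares with the spatially constant decaying function $ae^{r(-M)t}$, and on $\{x\ge (c^*(\infty)+\delta)t\}$ it uses a single rightward exponential supersolution $Ae^{-\lambda_\delta(x-(c^*(\infty)+\delta/2)t)}$; since $c>c^*(\infty)+\delta$ these two regions eventually cover all of $\mathbb{R}$. You instead confine the solution from \emph{both} sides with the pair $Me^{\mp\lambda(x\mp c_1t)}$, which are honest global supersolutions, and then run a direct maximum-principle argument on the resulting compact window against the time-decaying constant $\bar U(t)$.

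What your route buys is rigor at exactly the point where the paper is loose: the paper invokes ``the comparison principle'' for $ae^{r(-M)t}$ on the moving subdomain $\{x-ct\le -M\}$, but for a nonlocal operator this is delicate, since $J\!*\!u$ at points near the boundary samples values of $u$ \emph{outside} the subdomain where no control over $u$ versus $\hat u$ is available. Your Step~3 sidesteps this entirely: because $\bar U$ is constant in $x$, the putative maximizer $x^*$ of $w=u-\bar U$ is automatically a \emph{global} spatial maximizer of $u(t^*,\cdot)$, whence $(J*u-u)(t^*,x^*)\le 0$ with no subdomain issues. The two-sided exponential confinement in Step~1 is what guarantees that such a maximizer exists and lies in $W(t^*)$.

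One cosmetic remark: in Step~3 the condition you cite, $r-2\bar U<0$, is sufficient for $g(v)=v(r-v)$ to be decreasing at $\bar U$, but what you actually need is $g'(v)<0$ on the whole interval $[\bar U,u]$; this of course follows immediately since $r(x^*-ct^*)\le -\delta<0$ makes $g'(v)=r-2v<0$ for every $v\ge 0$.
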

\begin{proof}
Choose sufficiently large $M>0$ such that $r(-M)<0$.
This can be done due to the continuity of $r(\cdot)$
and $r(-\infty)<0$. From Theorem \ref{thm1},
we have $0\leq u(t,x)\leq r(\infty)$ since $u(0,x)=u_0(x)\in \mathbb{Y}_{r(\infty)}$.
It then follows that for $x-ct\leq-M$,
\begin{equation*}
\begin{split}
\frac{\partial u(t,x)}{\partial t}=&d\left(J\ast u-u\right)(t,x)+u(t,x)(r(x-ct)-u(t,x))\\
\leq&  d\left(J\ast u-u\right)(t,x)+u(t,x)(r(-M)-u(t,x))\\
\leq&  d\left(J\ast u-u\right)(t,x)+r(-M)u(t,x).
\end{split}
\end{equation*}
Note that $\hat{u}(t,x)=a e^{r(-M)t}$ is a solution of the following linear equation
$$\hat{u}_t(t,x)=d\left(J\ast \hat{u}-\hat{u}\right)(t,x)+r(-M)\hat{u}(t,x),$$
where $a$ is a positive constant such that $a\geq u_0$.
Therefore, $\hat{u}(t,x)=a e^{r(-M)t}$ is a supersolution of \eqref{1}
on the domain $\{(t,x)\in\mathbb{R}_+\times\mathbb{R}:x-ct\leq-M\}$
and the comparison principle implies that
\begin{equation*}
  0\leq u(t,x)\leq a e^{r(-M)t}, ~\forall t\geq 0, x-ct\leq-M,
\end{equation*}
which, together with $r(-M)<0$, indicates that for any $\epsilon>0$,
there exists $T_1>0$ such that
\begin{equation}\label{10}
  u(t,x)<\epsilon, ~\forall t\geq T_1, x\leq-M+ct.
\end{equation}

Pick $\delta\in (0,c-c^*(\infty))$ and let $\lambda_\delta>0$
be the smaller positive solution of $\phi(\infty;\lambda)=c^*(\infty)+\delta/2$,
that is, $$d\left(\int_{\mathbb{R}}J(y)e^{\lambda_\delta y}{\rm d}y-1\right)+r(\infty)
=\lambda_\delta\left(c^*(\infty)+\frac{\delta}{2}\right).$$
Note that $\bar{u}(t,x)=Ae^{-\lambda_\delta(x-(c^*(\infty)+\delta/2)t)}$
with $A$ being a positive constant is a solution of
the following linear nonlocal dispersal equation:
\begin{equation*}
  \bar{u}_t(t,x)=d(J\ast \bar{u}-\bar{u})(t,x)+r(\infty)\bar{u}(t,x),
\end{equation*}
which, together with the fact that $r(\infty)u(t,x)\geq u(t,x)(r(x-ct)-u(t,x))$,
shows that
$\bar{u}(t,x)$ is a supersolution of \eqref{1}. Since $u_0\in\mathbb{Y}_{r(\infty)}$
and $u_0(x)\equiv0$ for all sufficiently large $|x|$, we can choose $A$ large enough
such that $u_0(x)\leq\bar{u}(0,x)=Ae^{-\lambda_\delta x}$,
then the comparison principle yields that
\begin{equation*}
  u(t,x)\leq Ae^{-\lambda_\delta(x-(c^*(\infty)+\delta/2)t)}
  =Ae^{-\lambda_\delta(x-(c^*(\infty)+\delta)t)}e^{-\frac{\lambda_\delta \delta }{2}t}.
\end{equation*}
This implies that $u(t,x)\leq Ae^{-\frac{\lambda_\delta \delta }{2}t}$
for all $x\geq(c^*(\infty)+\delta)t$. Thus, for the above given $\epsilon>0$,
there exists $T_2>0$ such that
\begin{equation}\label{11}
u(t,x)<\epsilon, ~\forall t\geq T_2, x\geq(c^*(\infty)+\delta)t.
\end{equation}
By the choice of $\delta$, i.e., $c>c^*(\infty)+\delta$,
we can further find $T_3>0$ such that for $t\geq T_3$,
there holds that $-M+ct\geq (c^*(\infty)+\delta)t$.
In view of \eqref{10} and \eqref{11},
we can conclude that $u(t,x)<\epsilon$ for all $t\geq T=:\max\{T_1,T_2,T_3\}$
and all $x\in\mathbb{R}$.
\end{proof}

In the rest of this section, we consider the case that the edge of habitat
is moving at a speed less than $c^*(\infty)$.
The construction of a suitable subsolution plays a key role in our theoretical analysis.
To proceed, we introduce an auxiliary function that can be found
in Weinberger's pioneering work \cite{weinbergerSIAM1982},
see also \cite{LiBTsiam,LiBTjde}.
For $\lambda>0$ and $\gamma>0$, define
\begin{equation*}
\upsilon(x;\lambda,\gamma)=\left\{
                               \begin{array}{ll}
                                 e^{-\lambda x}\sin(\gamma x),
& \text{if } 0\leq x\leq\pi/\gamma, \\
                                 0, & \text{elsewhere}.
                               \end{array}
                             \right.
\end{equation*}
Note that $\upsilon(x;\lambda,\gamma)$ is nonnegative, continuous in $x\in\mathbb{R}$
and continuously differentiable when $x\neq 0, \pi/\gamma$.
Moreover, $\upsilon(x;\lambda,\gamma)$ takes the maximum at the point
$x=\sigma(\lambda,\gamma):=(1/\gamma)\arctan(\gamma/\lambda)\in(0,\pi/\gamma)$.
Note that $\sigma(\lambda,\gamma)$ is strictly decreasing in $\lambda>0$
and also that the maximum $\upsilon(\sigma(\lambda,\gamma);\lambda,\gamma)\in (0,1)$.

Define
\begin{equation}\label{14}
  \varphi(\lambda,\gamma)=d\int_{\mathbb{R}}J(y)e^{\lambda y}
\frac{\sin(\gamma y)}{\gamma}{\rm d}y.
\end{equation}
Assume that the compact support of the kernel $J$ is $[-L,L]$.
Since $J$ is symmetric, we have
$$\varphi(\lambda,\gamma)=d\int_0^L J(y)
\left(e^{\lambda y}-e^{-\lambda y}\right)\frac{\sin(\gamma y)}{\gamma}{\rm d}y.$$
From now on, $\gamma>0$ will be assumed to be sufficiently small
so that both $\sin(\gamma y)>0$ and $\cos(\gamma y)>0$ for $y\in(0,L]$.
Particularly, we can choose $\gamma\in(0,\frac{\pi}{2L})$.
Obviously, $\varphi(\lambda,\gamma)>0$.
Further, a direct computation leads to
\begin{equation*}
  \frac{\partial \varphi(\lambda,\gamma) }{\partial \lambda}
=\frac{d\int_0^L y J(y)(e^{\lambda y}+e^{-\lambda y})\sin(\gamma y){\rm d}y}{\gamma}>0,
\end{equation*}
which indicates that $\varphi(\lambda,\gamma)$ is increasing in $\lambda>0$.
Let
\begin{equation}\label{17}
  \phi_\gamma(l;\lambda)
:=\frac{d\left(\int_{\mathbb{R}}
J(y)e^{\lambda y}\cos(\gamma y){\rm d}y-1\right)+r(l)}{\lambda}
\end{equation}
and
\begin{equation*}
  c^*_\gamma(l)=\min_{\lambda>0}\phi_\gamma(l;\lambda).
\end{equation*}
Clearly, $\phi_\gamma(l;\lambda)<\phi(l;\lambda)$
and $\phi_\gamma(l;\lambda)$ converges to $\phi(l;\lambda)$
uniformly for $\lambda$ in any bounded interval as $\gamma\rightarrow0$.
Moreover, we have $c^*_\gamma(l)<c^*(l)$ and the convergence
$c^*_\gamma(l)\rightarrow c^*(l)$ as $\gamma\rightarrow 0$.

\begin{lemma}\label{lem3}
Assume that $c\in(0,c^*(\infty))$. For any $0<\delta<\frac{c^*(\infty)-c}{5}$,
let $l$ be the point such that $c^*(l)=c^*(\infty)-\delta$,
and small $\gamma\in(0,\frac{\pi}{2L})$ such that $c^*(l)-c^*_\gamma(l)\leq \delta$.
Choose $0<\lambda_1< \lambda_2<\lambda^*(l)$ satisfying
$\varphi(\lambda_1,\gamma)=c+\delta$ and $\varphi(\lambda_2,\gamma)=c^*_\gamma(l)-2\delta$.
Then for any $\lambda\in [\lambda_1,\lambda_2]$ and small $a>0$,
$w(t,x)=a\upsilon(x-l-\varphi(\lambda,\gamma)t)$ is a continuous subsolution of \eqref{2}.
Furthermore, if $u_0(x)\geq a\upsilon(x-l;\lambda,\gamma)$,
then $u(t,x)\geq a\upsilon(x-l-\varphi(\lambda,\gamma)t;\lambda,\gamma)$
for all $t>0$ and $x\in\mathbb{R}$, where $u(t,x)$ is the solution of \eqref{2}
with $u(0,x)=u_0(x)$.
\end{lemma}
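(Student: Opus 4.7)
The plan is to substitute $w(t,x)=a\upsilon(\xi;\lambda,\gamma)$ with $\xi:=x-l-\varphi(\lambda,\gamma)t$ into the differential form of \eqref{1} and verify the pointwise subsolution inequality on the open interior $\{\xi\in(0,\pi/\gamma)\}$ of the support of $\upsilon$. Off the support one has $w\equiv 0$ and $J\ast w\ge 0$, so the inequality degenerates to $0\le d(J\ast w)$ trivially, and the two kink points $\xi\in\{0,\pi/\gamma\}$ form a negligible set. Since $w$ is Lipschitz in $t$, the almost-everywhere differential inequality promotes to the integral subsolution inequality of Definition \ref{def1} by an absolute-continuity argument, after which Corollary \ref{cor1} supplies the desired comparison with $u$.

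The key computational input is a truncation comparison with the un-truncated companion $\tilde w(s):=e^{-\lambda s}\sin(\gamma s)$. The standing hypothesis $\gamma\in(0,\pi/(2L))$ places $\gamma s\in[-\pi/2,0]\cup[\pi,3\pi/2)$ whenever $s\in[-L,0]\cup[\pi/\gamma,\pi/\gamma+L]$, so $\tilde w(s)\le 0=\upsilon(s)$ there, while $\upsilon=\tilde w$ on $[0,\pi/\gamma]$. Hence $\upsilon(s)\ge\tilde w(s)$ throughout the convolution window $[\xi-L,\xi+L]$ for any $\xi\in(0,\pi/\gamma)$, yielding $d(J\ast\upsilon-\upsilon)(\xi)\ge d(J\ast\tilde w-\tilde w)(\xi)$. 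Using the evenness of $J$ together with the identity $\tilde w(\xi-y)=e^{\lambda y}[\cos(\gamma y)\tilde w(\xi)-\sin(\gamma y)e^{-\lambda\xi}\cos(\gamma\xi)]$ and the substitution $\gamma e^{-\lambda\xi}\cos(\gamma\xi)=\upsilon'(\xi)+\lambda\upsilon(\xi)$, the right-hand side rewrites, via definitions \eqref{14} and \eqref{17}, as
\begin{equation*}
\upsilon(\xi)\bigl[\lambda\bigl(\phi_\gamma(l;\lambda)-\varphi(\lambda,\gamma)\bigr)-r(l)\bigr]-\varphi(\lambda,\gamma)\upsilon'(\xi).
\end{equation*}

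Inserting this bound into $w_t-d(J\ast w-w)-w(r(x-ct)-w)$, the $\varphi(\lambda,\gamma)\upsilon'(\xi)$ term cancels $w_t=-a\varphi(\lambda,\gamma)\upsilon'(\xi)$ and the subsolution condition reduces to the pointwise requirement
\begin{equation*}
a\upsilon(\xi)\le \lambda\bigl[\phi_\gamma(l;\lambda)-\varphi(\lambda,\gamma)\bigr]+\bigl[r(x-ct)-r(l)\bigr].
\end{equation*}
Here the calibration of $\lambda_1,\lambda_2$ takes over: since $\varphi(\cdot,\gamma)$ is increasing and $\lambda\le\lambda_2$, one has $\varphi(\lambda,\gamma)\le\varphi(\lambda_2,\gamma)=c^*_\gamma(l)-2\delta\le\phi_\gamma(l;\lambda)-2\delta$, so $\lambda[\phi_\gamma-\varphi]\ge 2\lambda_1\delta$; simultaneously $\varphi(\lambda,\gamma)\ge\varphi(\lambda_1,\gamma)=c+\delta>c$ forces $x-ct\ge l+(\varphi(\lambda,\gamma)-c)t\ge l$ inside the support, so the monotonicity of $r$ gives $r(x-ct)\ge r(l)$. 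The right-hand side is therefore at least $2\lambda_1\delta$ uniformly in $(t,x)$, and since $\upsilon\le 1$ any $a\in(0,2\lambda_1\delta)$ closes the estimate.

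The most delicate step I anticipate is the sign bookkeeping in the truncation comparison combined with the residual $-\gamma\varphi(\lambda,\gamma)e^{-\lambda\xi}\cos(\gamma\xi)$ produced by the trigonometric expansion: it must recombine with $w_t$ with precisely the right sign so that only a smallness condition on $a$ remains, rather than an obstruction of definite sign. Both this algebraic cancellation and the pointwise inequality $\upsilon\ge\tilde w$ rely crucially on $\gamma\in(0,\pi/(2L))$, so every sign claim about $\sin(\gamma y)$ or $\cos(\gamma y)$ on $(0,L]$ must be traced back to that choice.
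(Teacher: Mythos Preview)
Your proposal is correct and follows essentially the same approach as the paper: verify the differential inequality trivially off the support, use the truncation comparison $\upsilon\ge\tilde w$ on the convolution window (which the paper writes as $w(t,x-y)\ge e^{-\lambda(x-y-l-\varphi t)}\sin[\gamma(x-y-l-\varphi t)]$ for $\gamma(x-y-l-\varphi t)\in[-\pi/2,3\pi/2]$), expand with the addition formula, invoke the definitions of $\varphi$ and $\phi_\gamma$ to cancel the cosine term exactly, and reduce to $a\le\lambda(\phi_\gamma(l;\lambda)-\varphi(\lambda,\gamma))$, which is at least $2\lambda_1\delta$ by the calibration of $\lambda_1,\lambda_2$. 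Your bookkeeping is slightly cleaner than the paper's in that you package the convolution computation as a single identity for $d(J\ast\tilde w-\tilde w)$, and you are more explicit about handling the kink points via absolute continuity, but the argument is the same.
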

\begin{proof}
By Definition \ref{def1}, we need to justify that $w(t,x)\leq [\mathcal{N}w](t,x)$
for all $(t,x)\in \mathbb{R}_+\times\mathbb{R}$. Notice that for $t>0$
and $x<l+\varphi(\lambda,\gamma)t$ or $x>l+\varphi(\lambda,\gamma)t+\pi/\gamma$,
$w(t,x)\equiv 0$, then the proof is trivial. Now we consider the case where $t>0$
and $l+\varphi(\lambda,\gamma)t\leq x\leq l+\varphi(\lambda,\gamma)t+\pi/\gamma$.
At present,
\begin{equation*}
  w(t,x)=a\upsilon(x-l-\varphi(\lambda,\gamma)t;\lambda,\gamma)
  =a e^{-\lambda(x-l-\varphi(\lambda,\gamma)t)}\sin[\gamma(x-l-\varphi(\lambda,\gamma)t)].
\end{equation*}
Clearly, $w(t,x)$ is continuously differential with respect to $t$ in such case.
According to Remark \ref{rem1}, it suffices to prove that for $t>0$
and $l+\varphi(\lambda,\gamma)t\leq x\leq l+\varphi(\lambda,\gamma)t+\pi/\gamma$,
there holds that
\begin{equation}\label{12}
  \frac{\partial w(t,x)}{\partial t}
\leq d\left(\int_{\mathbb{R}}J(y)w(t,x-y){\rm d}y-w(t,x) \right)+w(t,x)(r(x-ct)-w(t,x)).
\end{equation}

By a direct calculation, we obtain
\begin{equation*}
  \frac{\partial w(t,x)}{\partial t}
  =a \varphi(\lambda,\gamma) e^{-\lambda(x-l-\varphi(\lambda,\gamma)t)}
  \bigg(\lambda\sin[\gamma(x-l-\varphi(\lambda,\gamma)t)]
  -\gamma\cos[\gamma(x-l-\varphi(\lambda,\gamma)t)]\bigg).
\end{equation*}
Note that if $y\in supp(J)=[-L,L]$, we have
$$l+\varphi(\lambda,\gamma)t-L\leq x-y\leq l+\varphi(\lambda,\gamma)t+\pi/\gamma+L.$$
It then follows that $$ -\frac{\pi}{2}\leq-\gamma L\leq\gamma(x-y-l-\varphi(\lambda,\gamma)t)
\leq \pi+\gamma L \leq\frac{3\pi}{2}.$$
For such $t$ and $x$, there holds that $$w(t,x-y)\geq
e^{-\lambda(x-y-l-\varphi(\lambda,\gamma)t)}\sin[\gamma(x-y-l-\varphi(\lambda,\gamma)t)].$$
Therefore,
\begin{align*}
  &d\left(\int_{\mathbb{R}}J(y)w(t,x-y){\rm d}y-w(t,x) \right)
=d\left(\int_{-L}^{L}J(y)w(t,x-y){\rm d}y-w(t,x) \right)\\
  &\geq ad \bigg(\int_{-L}^{L}J(y)e^{\lambda y}
e^{-\lambda(x-l-\varphi(\lambda,\gamma)t)}
\sin[\gamma(x-y-l-\varphi(\lambda,\gamma)t)]{\rm d}y\\
  &\ \ \ \ -e^{-\lambda(x-l-\varphi(\lambda,\gamma)t)}
  \sin[\gamma(x-l-\varphi(\lambda,\gamma)t)] \bigg)\\
  &=ad e^{-\lambda(x-l-\varphi(\lambda,\gamma)t)}
\bigg(\int_{\mathbb{R}}J(y)e^{\lambda y}\cos( \gamma y){\rm d}y
\sin[\gamma(x-l-\varphi(\lambda,\gamma)t)]\\
  &\ \ \ \ -\int_{\mathbb{R}}J(y)e^{\lambda y}\sin(\gamma y){\rm d}y
\cos[\gamma(x-l-\varphi(\lambda,\gamma)t)]-\sin[\gamma(x-l-\varphi(\lambda,\gamma)t)] \bigg).
\end{align*}
To prove claim \eqref{12}, one only need to show
\begin{equation}\label{13}
\begin{split}
  &\lambda \varphi(\lambda,\gamma)\sin[\gamma(x-l-\varphi(\lambda,\gamma)t)]
  \\
  &\leq d\left(\int_{\mathbb{R}}J(y)e^{\lambda y}\cos( \gamma y){\rm d}y -1\right)
\sin[\gamma(x-l-\varphi(\lambda,\gamma)t)]\\
  &\ \ \ \ +\left(\gamma \varphi(\lambda,\gamma)-d\int_{\mathbb{R}}J(y)e^{\lambda y}
\sin( \gamma y){\rm d}y \right) \cos[\gamma(x-l-\varphi(\lambda,\gamma)t)]\\
  &\ \ \ \ +\sin[\gamma(x-l-\varphi(\lambda,\gamma)t)]
  \bigg(r(x-ct)-a\upsilon(x-l-\varphi(\lambda,\gamma)t;\lambda,\gamma)\bigg),
\end{split}
\end{equation}
which is equivalent to
\begin{equation}\label{15}
\lambda \varphi(\lambda,\gamma)\leq d\left(\int_{\mathbb{R}}J(y)e^{\lambda y}
\cos( \gamma y){\rm d}y -1\right)
+ r(x-ct)-a\upsilon(x-l-\varphi(\lambda,\gamma)t;\lambda,\gamma)
\end{equation}
due to \eqref{14} and $\sin[\gamma(x-l-\varphi(\lambda,\gamma)t)]>0$
for $l+\varphi(\lambda,\gamma)t< x<l+\varphi(\lambda,\gamma)t+\pi/\gamma$.
Note that \eqref{13} holds naturally when $x=l+\varphi(\lambda,\gamma)t$
or $l+\varphi(\lambda,\gamma)t+\pi/\gamma$.

According to the hypothesis,
$\varphi(\lambda_1,\gamma)=c+\delta$ and $\varphi(\lambda_2,\gamma)=c^*_\gamma(l)-2\delta$
for some $0<\lambda_1< \lambda_2<\lambda^*(l)$. Then for $\lambda\in[\lambda_1,\lambda_2]$,
$x>l+\varphi(\lambda,\gamma)t\geq l+\varphi(\lambda_1,\gamma)t=l+(c+\delta)t>l+ct$ with $t>0$,
that is, $x-ct>l$. Since $r(\cdot)$ is nondecreasing, then $r(x-ct)\geq r(l)$.
Notice also that $\upsilon(x-l-\varphi(\lambda,\gamma)t;\lambda,\gamma)\leq1$.
Therefore, in order to prove \eqref{15}, we only need to verify
\begin{equation}\label{16}
\lambda \varphi(\lambda,\gamma)
\leq d\left(\int_{\mathbb{R}}J(y)e^{\lambda y}\cos( \gamma y){\rm d}y -1\right) + r(l)-a,
\end{equation}
which, together with \eqref{17}, indicates that
\begin{equation}\label{18}
a\leq\lambda(\phi_\gamma(l;\lambda)-\varphi(\lambda,\gamma)).
\end{equation}
Based on the previous parameter setting,
we have $$\phi_\gamma(l;\lambda)-\varphi(\lambda,\gamma)
\geq c^*_\gamma(l)-\varphi(\lambda_2,\gamma)
=c^*_\gamma(l)-(c^*_\gamma(l)-2\delta)=2\delta.$$
Thus, \eqref{18} holds as long as we select $0<a\leq 2\lambda_1\delta$,
which, in return, shows that for $\lambda\in[\lambda_1,\lambda_2]$
and sufficiently small $a>0$,
$w(t,x)=a\upsilon(x-l-\varphi(\lambda,\gamma)t;\lambda,\gamma)$
is a continuous subsolution of \eqref{2}.
Furthermore, if $u_0(x)\geq a\upsilon(x-l;\lambda,\gamma)$,
then it follows from Corollary \ref{cor1}
that $u(t,x)\geq a\upsilon(x-l-\varphi(\lambda,\gamma)t;\lambda,\gamma)$
for all $t>0$ and $x\in\mathbb{R}$.
\end{proof}

Now we are ready to prove the main result of this section, which implies that
if the edge of the habitat suitable for species growth
is shifting at a speed $c<c^*(\infty)$,
then the species will persists in the space and spread to the right
at the asymptotic speed $c^*(\infty)$.

\begin{theorem}\label{persist-ss}
Assume that $c^*(\infty)>c>0$. Let $u(t,x)$ be the solution of \eqref{1}
with $u(0,\cdot)=u_0(\cdot)\in \mathbb{Y}_{r(\infty)}$.
Then the following statements are valid:
\begin{description}
  \item[(i)] for any $\varsigma>0$,
$$\lim_{t\rightarrow\infty}\sup_{x\leq(c-\varsigma)t} u(t,x)=0.$$
  \item[(ii)] if $u_0(x)\equiv0$ for sufficiently large $|x|$,
then for any $\varsigma>0$,
     $$\lim_{t\rightarrow\infty}\sup_{x\geq(c^*(\infty)+\varsigma)t} u(t,x)=0.$$
  \item[(iii)] if $u_0(x)>0$ on a closed interval,
then for each $\varsigma\in\left(0,\frac{c^*(\infty)-c}{2}\right)$,
there holds
      $$\lim_{t\rightarrow\infty,~(c+\varsigma)t
\leq x\leq(c^*(\infty)-\varsigma)t} u(t,x)=r(\infty).$$
\end{description}
\end{theorem}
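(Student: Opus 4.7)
The statement decomposes into three parts, and I would handle them separately, leaning on Theorem \ref{thm2} for (i) and (ii) and on Lemma \ref{lem3} for the bulk of (iii). For (i), I would pick $M>0$ so that $r(-M)<0$ (available because $r(-\infty)<0$ and $r$ is continuous), and reuse the constant-in-$x$ supersolution $\bar u(t,x)=r(\infty)e^{r(-M)t}$ of \eqref{1} on the moving slab $\{x-ct\le -M\}$ exactly as in the proof of Theorem \ref{thm2}. Since $x\le(c-\varsigma)t$ and $t\ge M/\varsigma$ force $x-ct\le-\varsigma t\le -M$, this gives $\sup_{x\le(c-\varsigma)t}u(t,x)\le r(\infty)e^{r(-M)t}\to 0$. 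For (ii), I would take $\delta\in(0,2\varsigma)$ and the corresponding $\lambda_\delta>0$ from Theorem \ref{thm2}, use compactness of $\mathrm{supp}\,u_0$ to dominate $u_0(x)\le Ae^{-\lambda_\delta x}$ for some large $A$, and then invoke the same exponential supersolution to obtain $u(t,x)\le Ae^{-\lambda_\delta(\varsigma-\delta/2)t}$ for $x\ge(c^*(\infty)+\varsigma)t$, which again decays uniformly.

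For (iii), the plan is to combine the traveling-hump subsolution of Lemma \ref{lem3} (to produce a positive pointwise lower bound along every ray $x=\alpha t$ with $\alpha\in[c+\varsigma,c^*(\infty)-\varsigma]$) with a KPP-type bootstrap (to push that lower bound up to $r(\infty)$). First I would tune parameters: choose $\delta\in(0,(c^*(\infty)-c)/5)$ with $\delta\le\varsigma/4$; pick $\gamma\in(0,\pi/(2L))$ small enough that $c^*_\gamma(l)\ge c^*(\infty)-2\delta$; and let $0<\lambda_1<\lambda_2<\lambda^*(l)$ be the two roots of $\varphi(\lambda,\gamma)=c+\delta$ and $\varphi(\lambda,\gamma)=c^*_\gamma(l)-2\delta$. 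With these choices the admissible speed window $[\varphi(\lambda_1,\gamma),\varphi(\lambda_2,\gamma)]$ contains $[c+\varsigma,c^*(\infty)-\varsigma]$.

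To initialize the hump, I would first note that the strong positivity of $P(t)$ and the comparison in Theorem \ref{thm1} (applied to a linear subequation whose constant reaction lies strictly below $r(-\infty)$) give $u(t,\cdot)>0$ everywhere for every $t>0$. I would then bootstrap: a preliminary application of Lemma \ref{lem3} with $\lambda=\lambda_1$ moves a positive lump to the right at speed $c+\delta>c$, hence outruns the edge $x=l+ct$, and after a waiting time $T_0$ yields $u(T_0,x)\ge a_0$ on some interval $[\tilde l,\tilde l+\pi/\gamma]$ with $\tilde l\ge l+cT_0$ and $a_0>0$. Since $\upsilon(\cdot\,;\lambda,\gamma)\le 1$ for every $\lambda$, this implies $u(T_0,\cdot)\ge a_0\upsilon(\cdot-\tilde l;\lambda,\gamma)$ simultaneously for all $\lambda\in[\lambda_1,\lambda_2]$ (after shrinking $a_0$ down to $\min_{\lambda\in[\lambda_1,\lambda_2]}\lambda(\phi_\gamma(l;\lambda)-\varphi(\lambda,\gamma))>0$). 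A time-shifted form of Lemma \ref{lem3} then delivers, for each $\alpha\in[c+\varsigma,c^*(\infty)-\varsigma]$ and the unique $\lambda_\alpha\in[\lambda_1,\lambda_2]$ with $\varphi(\lambda_\alpha,\gamma)=\alpha$, the estimate $\liminf_{t\to\infty}u(t,\alpha t)\ge a_0\upsilon(\sigma(\lambda_\alpha,\gamma);\lambda_\alpha,\gamma)\ge\mu_*>0$, with $\mu_*$ independent of $\alpha$ by compactness of $[\lambda_1,\lambda_2]$.

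The last step, and the main obstacle, is lifting this positive constant $\mu_*$ to $r(\infty)$. Given $\eta>0$, pick $K_\eta$ so that $r(y)\ge r(\infty)-\eta$ for $y\ge K_\eta$; on the moving half-line $\{x\ge K_\eta+ct\}$---which contains the wedge once $t\ge K_\eta/\varsigma$---the solution $u$ is a supersolution of the homogeneous KPP-type equation $v_t=d(J*v-v)+v(r(\infty)-\eta-v)$. Using the lower bound $u(T,\cdot)\ge\mu_*$ on a long interval strictly inside that half-line (obtained from the previous step by varying $\lambda$ across $[\lambda_1,\lambda_2]$) to dominate, at some large time $T$, the initial data of a compactly supported subsolution of the homogeneous equation, and invoking the classical spreading-speed / hair-trigger theorem for the homogeneous nonlocal dispersal logistic equation (whose spreading speed $c^*(\eta)\to c^*(\infty)$ as $\eta\to 0$), I would conclude $\liminf_{t\to\infty}u(t,x)\ge r(\infty)-\eta$ uniformly on $\{(c+\varsigma)t\le x\le(c^*(\infty)-\varsigma)t\}$. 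Sending $\eta\to 0$ and using $u\le r(\infty)$ closes the argument. The delicate point is that the nonlocal dispersion couples values across the moving boundary $x=K_\eta+ct$, so the comparison must be carried out with a genuinely compactly supported subsolution whose spreading front lies above $c$ but well below $c^*(\eta)$ and remains confined to $\{x\ge K_\eta+ct\}$ uniformly in $\alpha$; verifying this, together with the correct initialization of the hump relative to the shifting edge $x=l+ct$, is where essentially all of the technical work sits.
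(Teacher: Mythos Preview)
Your handling of (i) and (ii) matches the paper's proof essentially verbatim: both use the constant-in-space supersolution $ae^{r(-M)t}$ on $\{x-ct\le -M\}$ for (i), and the exponential supersolution $Ae^{-\lambda_\delta(x-(c^*(\infty)+\delta/2)t)}$ for (ii).

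For (iii) the initialization is also the same---use strong positivity at a short positive time to dominate the hump $a\upsilon(\cdot-l;\lambda,\gamma)$, then run Lemma~\ref{lem3} forward. The divergence is in how the positive lower bound is lifted to $r(\infty)$. You propose to compare with the homogeneous logistic equation $v_t=d(J*v-v)+v(r(\infty)-\eta-v)$ on the moving half-line $\{x\ge K_\eta+ct\}$ and invoke a hair-trigger/spreading theorem. The obstacle you flag is real and, as stated, unresolved: the solution of the homogeneous problem with compactly supported data is strictly positive everywhere for $t>0$ and spreads in both directions, so it is \emph{not} a subsolution of \eqref{1} where $r(x-ct)<r(\infty)-\eta$. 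Replacing it by a ``compactly supported subsolution whose front remains confined to $\{x\ge K_\eta+ct\}$'' and whose plateau height approaches $r(\infty)-\eta$ is precisely the missing ingredient---the humps of Lemma~\ref{lem3} are confined but have amplitude $a\le 2\lambda_1\delta$, while the hair-trigger solutions have the right amplitude but are not confined. Your proposal does not bridge this gap.

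The paper avoids the issue altogether. Instead of passing to a homogeneous comparison, it works directly with the Duhamel formula \eqref{3}. It first glues a $\lambda_1$-hump on the left to a $\lambda_2$-hump on the right to produce a plateau subsolution $W(t-t_0,x)$ of \eqref{1} whose flat region $\{W=\alpha\}$ widens linearly in $t$. It then inserts $W$ into \eqref{3}, expands the semigroup $P(t)$ via its series representation \eqref{030}, and---after truncating the series at order $N$ and narrowing the wedge by $mNL$ on each side---extracts the scalar inequality $u(t,x)\ge v^m(t)$, where
\[
(v^m)'(t)=-\rho\, v^m(t)+(1-\epsilon)\,v^{m-1}(t)\bigl(\rho+r(\infty)-\delta\lambda^*(\infty)-v^{m-1}(t)\bigr),\qquad v^m(t_1)=(1-\epsilon)\alpha.
\]
One checks $\lim_{m\to\infty}v^m(\infty)=r(\infty)-\delta\lambda^*(\infty)-\epsilon\rho/(1-\epsilon)$ and then lets $\delta,\epsilon,\iota\to 0$. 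This recursive ODE bootstrap is the paper's replacement for your black-box spreading step; it is self-contained and never requires a comparison across the shifting edge.
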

\begin{proof}
(i) By Theorem \ref{thm1}, we have $0\leq u(t,x)\leq r(\infty)$ for all $t\geq0$
and $x\in\mathbb{R}$. The first part of the  proof of Theorem \ref{thm2}
shows that for any $\epsilon>0$, there exists large numbers $M>0$ and $T_1>0$ such that
\begin{equation*}
u(t,x)<\epsilon, ~\forall t\geq T_1, x\leq-M+ct.
\end{equation*}
On the other hand, for any given $\varsigma>0$, there exists $T_2>0$ such that
for $t\geq T_2$, $(c-\varsigma)t\leq -M+ct$.
In fact, this can be done if we let $T_2\geq M/\varsigma$.
Thus, we can obtain that
\begin{equation*}
  u(t,x)<\epsilon, ~\forall t\geq \{T_1,T_2\}, x\leq (c-\varsigma)t.
\end{equation*}
This completes the proof of (i).

(ii) For any $\varsigma>0$, let $\lambda_\varsigma>0$
be the smaller positive solution of $\phi(\infty;\lambda)=c^*(\infty)+\varsigma/2$.
In other words, we have
$d\left(\int_{\mathbb{R}}J(y)e^{\lambda_\varsigma y}{\rm d}y-1\right)+r(\infty)
=\lambda_\varsigma(c^*(\infty)+\varsigma/2)$.
Note that $\hat{u}(t,x)=Ae^{-\lambda_\varsigma(x-(c^*(\infty)+\varsigma/2)t)}$
with $A>0$ being a constant satisfies
\begin{equation*}
  \hat{u}_t(t,x)=d(J\ast \hat{u}-\hat{u})(t,x)+r(\infty)\hat{u}(t,x).
\end{equation*}
This, together with the fact that $r(\infty)u(t,x)\geq u(t,x)(r(x-ct)-u(t,x))$,
shows that
$\hat{u}(t,x)$ is a supersolution of \eqref{1}. Since $u_0\in\mathbb{Y}_{r(\infty)}$
and $u_0(x)\equiv0$ for all sufficiently large $|x|$,
we can choose $A$ large enough such that $u_0(x)\leq\hat{u}(0,x)=Ae^{-\lambda_\varsigma x}$,
and hence, the comparison principle yields that
\begin{equation*}
  0\leq u(t,x)\leq Ae^{-\lambda_\varsigma(x-(c^*(\infty)+\varsigma/2)t)}
  =Ae^{-\lambda_\varsigma(x-(c^*(\infty)+\varsigma)t)}
e^{-\frac{\lambda_\varsigma \varsigma }{2}t}.
\end{equation*}
This implies that $0\leq u(t,x)\leq Ae^{-\frac{\lambda_\varsigma \varsigma }{2}t}$
for all $x\geq(c^*(\infty)+\varsigma)t$.
Therefore, we can conclude that
$\lim_{t\rightarrow\infty}\sup_{x\geq(c^*(\infty)+\varsigma)t} u(t,x)=0.$

(iii) Choose $\delta$ small enough with
$0<\delta<\min\left\{\frac{r(\infty)}{\lambda^*(\infty)},\frac{c^*(\infty)-c}{5}\right\}$
and let $l$, $\lambda_1$, $\lambda_2$ and $\gamma$ be as in Lemma \ref{lem3}.
Then Lemma \ref{lem3} implies that for any $\lambda\in[\lambda_1,\lambda_2]$
and small $\alpha>0$, $\frac{\alpha}{\upsilon(\sigma(\lambda,\gamma);\lambda,\gamma)}
\upsilon(x-l-\varphi(\lambda,\gamma)t;\lambda,\gamma)$
is a continuous subsolution of \eqref{1}.

Since $u_0\in\mathbb{Y}_{r(\infty)}$ and $u_0(x)>0$ on a closed interval,
it follows from Theorem \ref{thm1} that $u(t,x)>0$ for all $t>0$ and $x\in\mathbb{R}$.
Choose $0<t_0\leq \frac{\sigma(\lambda_1,\gamma)}{c}$ with $c>0$,
$\alpha>0$ and $\gamma>0$ sufficiently small such that
$u(t_0,x)\geq \alpha$ for $x\in[l,l+4\pi/\gamma]$.
Define
\begin{equation*}
  W(0,x)=\left\{
           \begin{array}{ll}
             \frac{\alpha}{\upsilon(\sigma(\lambda_1,\gamma);\lambda_1,\gamma)}
             \upsilon(x-l;\lambda_1,\gamma),
& \text{ if } l \leq x\leq l+\sigma(\lambda_1,\gamma),  \\
             \alpha, & \text{ if } l+\sigma(\lambda_1,\gamma)\leq x\leq l+\frac{3\pi}{\gamma}
             +\sigma(\lambda_2,\gamma), \\
             \frac{\alpha}{\upsilon(\sigma(\lambda_2,\gamma);\lambda_2,\gamma)}
             \upsilon(x-l-\frac{3\pi}{\gamma};\lambda_2,\gamma),
& \text{ if } l+\frac{3\pi}{\gamma}+\sigma(\lambda_2,\gamma)
             \leq x\leq l+\frac{4\pi}{\gamma}, \\
             0, & \text{ eslewhere. }
           \end{array}
         \right.
\end{equation*}
By the definition of $\upsilon$, we can easily check that for
$\varrho\in[0,2\pi/\gamma]$, there holds
\begin{align*}
  W(0,x)\geq&  \frac{\alpha}{\upsilon(\sigma(\lambda_1,\gamma);\lambda_1,\gamma)}
             \upsilon(x-l-\varrho;\lambda_1,\gamma),\\
  W(0,x)\geq&\frac{\alpha}{\upsilon(\sigma(\lambda_2,\gamma);\lambda_2,\gamma)}
             \upsilon(x-l-\frac{3\pi}{\gamma}+\varrho;\lambda_2,\gamma).
\end{align*}
Clearly, $u(t_0,x)\geq \alpha \geq W(0,x)$ for $x\in[l,l+4\pi/\gamma]$.
Applying Lemma \ref{lem3}, we have
\begin{align}
  u(t,x)\geq& \frac{\alpha}{\upsilon(\sigma(\lambda_1,\gamma);\lambda_1,\gamma)}
             \upsilon(x-l-\varphi(\lambda_1,\gamma)(t-t_0)-\varrho;\lambda_1,\gamma),
              \label{19}\\
  u(t,x)\geq& \frac{\alpha}{\upsilon(\sigma(\lambda_1,\gamma);\lambda_1,\gamma)}
             \upsilon(x-l-\frac{3\pi}{\gamma}-
             \varphi(\lambda_1,\gamma)(t-t_0)+\varrho;\lambda_1,\gamma), \label{20}
\end{align}
for $t\geq t_0$ and $0\leq \varrho \leq 2\pi/\gamma$.
Moreover, by the arguments similar to those in \cite[Theorem 2.2 (iii)]{LiBTsiam}, we can show  that for all $t\geq t_0$,
\begin{equation}\label{21}
  u(t,x)\geq W(t-t_0,x),
\end{equation}
where
\begin{equation}\label{22}
\begin{split}
  &W(t-t_0,x)\\
&=\left\{
               \begin{array}{ll}
                 \frac{\alpha}{\upsilon(\sigma(\lambda_1,\gamma);\lambda_1,\gamma)}
              & \text{ if } l+\varphi(\lambda_1,\gamma)(t-t_0)\leq x\\
                 \times\upsilon(x-l-\varphi(\lambda_1,\gamma)(t-t_0);\lambda_1,\gamma),
              & \quad \leq l+\varphi(\lambda_1,\gamma)(t-t_0)+\sigma(\lambda_1,\gamma), \\
                 \alpha,
& \text{ if } l+\varphi(\lambda_1,\gamma)(t-t_0)+\sigma(\lambda_1,\gamma) \leq x  \\
                  &\quad \leq l+\varphi(\lambda_2,\gamma)(t-t_0)
                   +\sigma(\lambda_2,\gamma)+\frac{3\pi}{\gamma},\\
                 \frac{\alpha}{\upsilon(\sigma(\lambda_2,\gamma);\lambda_2,\gamma)}
               &\text{ if } l+\varphi(\lambda_2,\gamma)(t-t_0)+\sigma(\lambda_2,\gamma )
+\frac{3\pi}{\gamma},\\
                  \times\upsilon(x-l-\frac{3\pi}{\gamma}
                -\varphi(\lambda_2,\gamma)(t-t_0);\lambda_2,\gamma),
             & \quad \leq x\leq l+\frac{4\pi}{\gamma}+\varphi(\lambda_2,\gamma)(t-t_0) \\
                 0, & \text{ eslewhere. }
               \end{array}
             \right.
\end{split}
\end{equation}

Choose sufficiently large $t_1>t_0$ as the initial time.
It then follows from \eqref{3} that for any $t>t_1$, $u(t,x)$ satisfies
\begin{equation}\label{23}
\begin{split}
u(t,x)=&[e^{-\rho (t-t_1)}P(t-t_1)u(t_1,\cdot)](x)\\
&+\int_{t_1}^t\left[e^{-\rho (t-s)}P(t-s)
u(s,\cdot)(\rho+r(\cdot-cs)-u(s,\cdot))\right](x){\rm d}s.\\
\end{split}
\end{equation}
By \eqref{21}, the nondecreasing monotonicity of $u(\rho+r(x-ct)-u)$ on $u$
and the positivity of $P(t)$, we further get that
\begin{equation}\label{24}
\begin{split}
u(t,x)\geq& [e^{-\rho (t-t_1)}P(t-t_1)W(t_1-t_0,\cdot)](x)\\
&+\int_{t_1}^t\left[e^{-\rho (t-s)}P(t-s)W(s-t_0,\cdot)
(\rho+r(\cdot-cs)-W(s-t_0,\cdot))\right](x){\rm d}s,\\
\end{split}
\end{equation}
where $t>t_1$. By the definition of $P(t)$ (see \eqref{030}),
for the linear part, we obtain
\begin{equation*}
\begin{split}
&[e^{-\rho (t-t_1)}P(t-t_1)W(t_1-t_0,\cdot)](x)\\
&=e^{-\rho (t-t_1)}e^{-d(t-t_1)}
\sum_{k=0}^{\infty}\frac{[d(t-t_1)]^k}{k!}J^{(k)}\ast W(t_1-t_0,x)\\
&\geq e^{-\rho (t-t_1)}e^{-d(t-t_1)}
\sum_{k=0}^{N}\frac{[d(t-t_1)]^k}{k!}J^{(k)}\ast W(t_1-t_0,x)\\
&=e^{-\rho (t-t_1)}e^{-d(t-t_1)}\bigg(W(t_1-t_0,x)+\frac{d(t-t_1)}{1!}
\int_{-L}^{L}J(x_1)W(t_1-t_0,x-x_1){\rm d}x_1\\
&\ \ \ \ +\frac{[d(t-t_1)]^2}{2!}\int_{-L}^{L}\int_{-L}^{L}J(x_1)J(x_2)
W(t_1-t_0,x-x_1-x_2){\rm d}x_1{\rm d}x_2+\cdot\cdot\cdot\\
&\ \ \ \ +\frac{[d(t-t_1)]^N}{N!}\underbrace{\int_{-L}^{L}\cdot\cdot\cdot
\int_{-L}^{L}}_{N \text{ terms }}\prod_{i=1}^{N}J(x_i)
W\left(t_1-t_0,x-\sum_{i=1}^{N}x_i\right)
{\rm d}x_1{\rm d}x_2\cdot\cdot\cdot{\rm d}x_N\bigg),
\end{split}
\end{equation*}
where $J^{(0)}=\delta_0$ and $J^{(k)}\ast=J\ast J^{(k-1)}\ast$ for any $k\geq 1$, $N$ is some positive integer, and $[-L,L]:=supp(J)$. Note that when $t\geq t_1$ and $x$ satisfies
\begin{equation}\label{25}
 l+\varphi(\lambda_1,\gamma)(t-t_0)+\sigma(\lambda_1,\gamma) +NL \leq x
                 \leq l+\varphi(\lambda_2,\gamma)(t-t_0)
                   +\sigma(\lambda_2,\gamma)+\frac{3\pi}{\gamma}-NL,
\end{equation}
which does make sense by choosing $t_1>t_0+\frac{\sigma(\lambda_1,\gamma)-\sigma(\lambda_2,\gamma)+2NL}
{\varphi(\lambda_2,\gamma)-\varphi(\lambda_1,\gamma)}$, we see from \eqref{22} that
\begin{equation*}
  W(t_1-t_0,x)\geq \alpha,
  ~W\left(t_1-t_0,x-\sum_{i=1}^{\tilde{N}}x_i\right)\geq \alpha \text{ for } x_i\in[-L,L]
\text{ and } \tilde{N}=1,2,\cdot\cdot\cdot,N.
\end{equation*}
This, together with the fact that  $\int_{-L}^{L}J(y){\rm d}y=1$,
implies that for any $\epsilon>0$, there exists sufficiently large $N_1>0$
such that for $N\geq N_1$,
\begin{equation}\label{26}
\begin{split}
&[e^{-\rho (t-t_1)}P(t-t_1)W(t_1-t_0,\cdot)](x)\\
&\geq \alpha e^{-\rho (t-t_1)}e^{-d(t-t_1)}\sum_{k=0}^{N}\frac{[d(t-t_1)]^k}{k!}\\
&=\alpha e^{-\rho (t-t_1)}e^{-d(t-t_1)}\left(e^{d(t-t_1)}
-\sum_{k=N+1}^{\infty}\frac{[d(t-t_1)]^k}{k!}\right)\\
&=\alpha e^{-\rho (t-t_1)}\left(1-e^{-d(t-t_1)}
\sum_{k=N+1}^{\infty}\frac{[d(t-t_1)]^k}{k!}\right)\\
&\geq\alpha (1-\epsilon)e^{-\rho (t-t_1)}.
\end{split}
\end{equation}
Regarding the nonlinear part, for any $s\in(t_1,t)$, we have
\begin{equation*}
\begin{split}
&\left[e^{-\rho (t-s)}P(t-s)W(s-t_0,\cdot)(\rho+r(\cdot-cs)-W(s-t_0,\cdot))\right](x)\\
&\geq e^{-\rho (t-s)}e^{-d(t-s)}
\sum_{k=0}^{N}\frac{[d(t-s)]^k}{k!}J^{(k)}\ast W(s-t_0,x)(\rho+r(x-cs)-W(s-t_0,x))\\
&=e^{-\rho (t-s)}e^{-d(t-s)}\bigg(W(s-t_0,x)(\rho+r(x-cs)-W(s-t_0,x))\\
&\ \ \ \ +\frac{[d(t-s)]}{1!}\int_{-L}^{L}J(x_1)
W(s-t_0,x-x_1)(\rho+r(x-x_1-cs)-W(s-t_0,x-x_1)){\rm d}x_1 \\
&\ \ \ \ +\cdot\cdot\cdot
+\frac{[d(t-s)]^N}{N!}\underbrace{\int_{-L}^{L}\cdot\cdot\cdot
\int_{-L}^{L}}_{N \text{ terms }}\prod_{i=1}^{N}J(x_i)
W\left(s-t_0,x-\sum_{i=1}^{N}x_i\right)\\
&\ \ \ \ \times\left(\rho+r\left(x-\sum_{i=1}^{N}x_i-cs\right)
-W\left(s-t_0,x-\sum_{i=1}^{N}x_i\right)\right){\rm d}x_1\cdot\cdot\cdot{\rm d}x_N\bigg).
\end{split}
\end{equation*}
For any $t\geq t_1$ and $x$ satisfying \eqref{25},
since $r(\cdot)$ is nondecreasing and $\varphi(\lambda_1,\gamma)=c+\delta$,
we then oatain
\begin{equation*}
\begin{split}
x-\sum_{i=1}^{\tilde{N}}x_i-ct\ge& l+\varphi(\lambda_1,\gamma)(t-t_0)
+\sigma(\lambda_1,\gamma)-ct\\
=&l-\varphi(\lambda_1,\gamma)t_0+\sigma(\lambda_1,\gamma)+\delta t\\
\geq&l-\varphi(\lambda_1,\gamma)t_0+\sigma(\lambda_1,\gamma)+\delta t_0\\
=&l+\sigma(\lambda_1,\gamma)-c t_0
\geq l,
\end{split}
\end{equation*}
which implies that $r\left(x-\sum_{i=1}^{\tilde{N}}x_i-cs\right)\geq r(l)$
for $\tilde{N}=1,2,\cdot\cdot\cdot,N$. According to the assumption that
$c^*(l)=c^*(\infty)-\delta$, we have
\begin{align*}
\frac{d\left(\int_{\mathbb{R}}J(y)e^{\lambda^*(\infty) y}{\rm d}y-1\right)
+r(\infty)}{\lambda^*(\infty)}-\delta=&
\inf_{\lambda>0}\frac{d\left(\int_{\mathbb{R}}
J(y)e^{\lambda y}{\rm d}y-1\right)+r(l)}{\lambda}\\
\leq&\frac{d\left(\int_{\mathbb{R}}
J(y)e^{\lambda^*(\infty) y}{\rm d}y-1\right)+r(l)}{\lambda^*(\infty)}.
\end{align*}
It then follows that $r(l)\geq r(\infty)-\delta \lambda^*(\infty)$,
and hence $$r\left(x-\sum_{i=1}^{\tilde{N}}x_i-cs\right)\geq r(\infty)-\delta \lambda^*(\infty).$$
Similar to \eqref{26}, we can obtain that for the above $\epsilon>0$,
when $t\geq s\geq t_1$ and $x$ satisfying \eqref{25},
there exists large $N_2>0$ such that $N\geq N_2$,
\begin{equation}\label{27}
\begin{split}
&\left[e^{-\rho (t-s)}P(t-s)W(s-t_0,\cdot)(\rho+r(\cdot-cs)-W(s-t_0,\cdot))\right](x)\\
&\geq e^{-\rho (t-s)}e^{-d(t-s)}\alpha(\rho+r(\infty)-\delta \lambda^*(\infty)-\alpha)
\sum_{k=0}^{N}\frac{[d(t-s)]^k}{k!}\\
&=e^{-\rho (t-s)}\alpha(\rho+r(\infty)-\delta \lambda^*(\infty)-\alpha)
\left(1-e^{-d(t-s)}\sum_{k=N+1}^{\infty}\frac{[d(t-s)]^k}{k!}\right)\\
&\geq e^{-\rho (t-s)}\alpha(\rho+r(\infty)-\delta \lambda^*(\infty)-\alpha)(1-\epsilon).
\end{split}
\end{equation}
Let $N\geq\max\{N_1,N_2\}$. In view of \eqref{24}, \eqref{26} and \eqref{27},
we then conclude that for $t\geq t_1$ and $x$ satisfying \eqref{25},
there holds
\begin{equation*}
  u(t,x)\geq v^1(t),
\end{equation*}
where
\begin{equation*}
  v^1(t)=(1-\epsilon)\alpha e^{-\rho (t-t_1)}+(1-\epsilon)
\int_{t_1}^{t}e^{-\rho (t-s)}\alpha(\rho+r(\infty)-\delta \lambda^*(\infty)-\alpha){\rm d}s.
\end{equation*}

Using \eqref{24}, by induction, we can further derive that for sufficiently large $t\geq t_1$ and $x$ satisfying
\begin{equation}\label{28}
 l+\varphi(\lambda_1,\gamma)(t-t_0)+\sigma(\lambda_1,\gamma) +mNL \leq x
                 \leq l+\varphi(\lambda_2,\gamma)(t-t_0)
                   +\sigma(\lambda_2,\gamma)+\frac{3\pi}{\gamma}-mNL,
\end{equation}
there holds
\begin{equation*}
  u(t,x)\geq v^m(t)
\end{equation*}
with $v^m$ being defined recursively by
\begin{equation}\label{29}
\begin{split}
 v^m(t)=&(1-\epsilon)\alpha e^{-\rho (t-t_1)}\\
&+(1-\epsilon)\int_{t_1}^{t}e^{-\rho (t-s)} v^{m-1}(s)
(\rho+r(\infty)-\delta \lambda^*(\infty)- v^{m-1}(s)){\rm d}s, ~m\geq2.
\end{split}
\end{equation}

Clearly, $0\leq v^m(t)\leq r(\infty)$ for all $m\geq1$.
Next, we study the asymptotic behavior of the sequence
$\{v^m(t)\}$ as $t\rightarrow\infty$.
First, we rewrite \eqref{29} in its differential form:
\begin{equation}\label{30}
\begin{cases}
\frac{{\rm d}v^m (t)}{{\rm d} t}
=-\rho  v^m (t)+(1-\epsilon)v^{m-1}(t)
(\rho+r(\infty)-\delta \lambda^*(\infty)- v^{m-1}(t)), t>t_1,\\
v^m (t_1)=(1-\epsilon)\alpha, ~m\geq2.
\end{cases}
\end{equation}
By understanding \eqref{30} as a linear first order ODE on $v^m$
with a nonhomogeneous term involving $v^{m-1}$,
we see from the classical theory of particular solutions that
\begin{equation}\label{52}
  v^m(t)=v^m(\infty)+B_m(t)e^{-\rho(t-t_1)},
\end{equation}
where $v^m(\infty)=\lim_{t\rightarrow\infty}v^m(t)$ and $B_m(t)$
is a sum of polynomials of $t$, and products of polynomials of $t$
and exponential functions with the form of $e^{-k\rho(t-t_1)}$ for some
$k>0$. It then follows from \eqref{52} that
$\lim_{t\rightarrow\infty}\frac{{\rm d}v^m (t)}{{\rm d} t}=0$,
which, together with equation \eqref{30}, indicates that
\begin{equation*}
  -\rho  v^m (\infty)+(1-\epsilon)v^{m-1}
(\infty)(\rho+r(\infty)-\delta \lambda^*(\infty)- v^{m-1}(\infty))=0.
\end{equation*}
Let $m\rightarrow\infty$ in the above equation, we have
\begin{equation*}
  \lim_{m\rightarrow\infty}v^m (\infty)
=r(\infty)-\delta \lambda^*(\infty)-\frac{\epsilon\rho}{1-\epsilon}.
\end{equation*}
For an arbitrarily small $\iota>0$, we choose $M$ such that
\begin{equation}\label{33}
v^M (\infty)\geq r(\infty)-\delta \lambda^*(\infty)-\frac{\epsilon\rho}{1-\epsilon}-\iota.
\end{equation}
We now choose $t_1$ large enough such that for $t\geq t_1$,
\begin{equation}\label{34}
\begin{split}
 &l+\varphi(\lambda_1,\gamma)(t-t_0)+\sigma(\lambda_1,\gamma) +MNL \\
&\leq x\leq l+\varphi(\lambda_2,\gamma)(t-t_0)
+\sigma(\lambda_2,\gamma)+\frac{3\pi}{\gamma}-MNL.
\end{split}
\end{equation}
Furthermore,
\begin{equation}\label{35}
 \lim_{t\rightarrow\infty}\inf_{t\geq t_1, x \text{ satisfies }
 \eqref{34}}u(t,x)\geq v^M(\infty).
\end{equation}

For any given $\varsigma\in\left(0,\frac{c^*(\infty)-c}{2}\right)$,
pick $\delta$ small enough with $\delta<\varsigma/4$,
by Lemma \ref{lem3}, $\varphi(\lambda_1,\gamma)=c+\delta$
and $c^*(\infty)=c^*(l)+\delta\leq c^*_\gamma(l)+2\delta=\varphi(\lambda_2,\gamma)+4\delta$.
Thus, we can choose sufficiently large $t\geq t_1$ such that
\begin{equation*}
\begin{split}
&l+\varphi(\lambda_1,\gamma)(t-t_0)+\sigma(\lambda_1,\gamma) +MNL\\
&\leq(c+\varsigma)t<(c^*(\infty)-\varsigma)t\\
&\leq l+\varphi(\lambda_2,\gamma)(t-t_0)+\sigma(\lambda_2,\gamma)+\frac{3\pi}{\gamma}-MNL.
\end{split}
\end{equation*}
It follows that
\begin{equation*}
 \lim_{t\rightarrow\infty}\inf_{ (c+\varsigma)t\leq x
\leq (c^*(\infty)-\varsigma)t}u(t,x)\geq v^M(\infty)\geq r(\infty)
-\delta \lambda^*(\infty)-\frac{\epsilon\rho}{1-\epsilon}-\iota.
\end{equation*}
Since the parameters $\delta$, $\epsilon$, $\iota$ can be chosen arbitrarily small, we then obtain
\begin{equation*}
 \lim_{t\rightarrow\infty}\inf_{ (c+\varsigma)t\leq x
\leq (c^*(\infty)-\varsigma)t}u(t,x)\geq r(\infty).
\end{equation*}
On the other hand, Theorem \ref{thm1} implies that $0\leq u(t,x)\leq r(\infty)$
for all $t>0$ and $x\in\mathbb{R}$. In particular, we have
$$\lim_{t\rightarrow\infty}\sup_{ (c+\varsigma)t\leq x
\leq (c^*(\infty)-\varsigma)t}u(t,x)\leq r(\infty),$$
which implies that the statement (iii) is valid.
\end{proof}

We remark that the persistence obtained in Theorem \ref{persist-ss}
should be understood from the view of ``by moving", that is, the species
will move toward the better resource with speed $c^*(\infty)$
which is larger than the shifting speed $c$. In fact, for any given location $x$,
since the resource function $r(x-ct)$ will become negative eventually as time goes,
then the population at this location will vanish.

\section{Forced traveling waves} \label{tws}

In this section, we consider the positive traveling wave solutions of \eqref{1}
with the wave speed at which the habitat is shifting.

Letting $u(t,x)=U(\xi)$ with $\xi=x-ct$, we see from \eqref{1} that $U(\xi)$ satisfies
\begin{equation}\label{36}
 -cU'(\xi)=d\left(\int_{\mathbb{R}}J(y)U(\xi-y){\rm d}y-U(\xi)\right)+U(\xi)(r(\xi)-U(\xi)),
~\xi\in\mathbb{R},
\end{equation}
where the symbol prime stands for the derivative.
Recall that $c>0$ is the habitat shifting speed.
As explained in \cite{HuZou2017},
we impose the following boundary conditions
\begin{equation}\label{37}
 \lim_{\xi\rightarrow-\infty}U(\xi)=0, ~\lim_{\xi\rightarrow\infty}U(\xi)=r(\infty).
\end{equation}
This type of traveling waves can help us understand the point-wise ``die-out dynamics" of
the species under consideration .
By Remark \ref{rem1}, we have
$0 \leq U(\xi)\leq r(\infty)$, $\forall\xi\in\mathbb{R}$. Moreover, by the strong maximum principles for nonlocal equations \cite[Theorem 2.12]{Coville2007},
we get $0<U<r(\infty)$ in $\mathbb{R}$.

Let $V(\xi)=U(-\xi)$, $\forall \xi\in\mathbb{R}$.
It then follows from \eqref{36} and the symmetry of $J$ that
\begin{equation}\label{38}
cV'(\xi)=d\left(\int_{\mathbb{R}}J(y)V(\xi-y){\rm d}y-V(\xi)\right)+V(\xi)(r(-\xi)-V(\xi)),
~\xi\in\mathbb{R}.
\end{equation}
Correspondingly, we have
\begin{equation}\label{39}
\lim_{\xi\rightarrow-\infty}V(\xi)=r(\infty), ~\lim_{\xi\rightarrow\infty}V(\xi)=0.
\end{equation}
In the following, by the combination of super/sub-solutions and monotone iterations,
we will prove that \eqref{38} admits a nonincreasing solution
satisfying \eqref{39} for any given $c>0$, which gives rise to the nondecreasing
traveling wave solution of \eqref{1} connecting $0$ to $r(\infty)$.

In order to construct a subsolution for \eqref{38}, we introduce an auxiliary nonlocal dispersal equation of ignition type.
For any small $\varepsilon\in(0,r(\infty)/5)$, define
\begin{equation*}
f_\varepsilon(u)= \left\{
                               \begin{array}{ll}
                               u(r(\infty)-\varepsilon-u),& \text{if } u\geq 0, \\
                               0, & \text{if} -\varepsilon\leq u<0.
                               \end{array}
                             \right.
\end{equation*}
We consider the following problem:
\begin{equation}\label{48}
\frac{\partial u(t,x)}{\partial t}=d(J\ast u-u)(t,x)+f_\varepsilon(u(t,x)).
\end{equation}
According to \cite{Coville2003,Coville2012}, equation \eqref{48} admits a decreasing traveling wave
solution $V_\varepsilon(\xi)$ ($\xi=x-c_\varepsilon t$)
connecting $r(\infty)-\varepsilon$ to $-\varepsilon$ with speed $c_\varepsilon$,
that is, $(V_\varepsilon,c_\varepsilon)$ satisfies
\begin{equation}\label{49}
\begin{cases}
 -c_\varepsilon V'_\varepsilon(\xi)=d\left(\int_{\mathbb{R}}J(y)V_\varepsilon(\xi-y){\rm d}y
 -V_\varepsilon(\xi)\right)+f_\varepsilon(V_\varepsilon(\xi)),\\
V_\varepsilon(-\infty)=r(\infty)-\varepsilon, ~V_\varepsilon(+\infty)=-\varepsilon,
~V'_\varepsilon(\xi)<0.
\end{cases}
\end{equation}
Then we have the following observation.
\begin{lemma}\label{speed}
Let $c^*(\infty)$ be the minimal wave speed of the monotone traveling wave solution
connecting $r(\infty)$ to $0$
for the nonlocal dispersal Fisher-KPP equation:
\begin{equation}\label{51}
u_t=d(J\ast u-u)+u(r(\infty)-u).
\end{equation}
Then
$
  \lim_{\varepsilon\rightarrow 0^+}c_\varepsilon=c^*(\infty).
$
\begin{proof}
Integrating the first equation of \eqref{49} from $-\infty$ to $\infty$,
by the symmetry of $J$, for any $\varepsilon\in(0,r(\infty)/5)$, we have
\begin{equation}\label{50}
  c_\varepsilon=\frac{1}{r(\infty)}\int_{-\infty}^{\infty}
  f_\varepsilon(V_\varepsilon(\xi)){\rm d}\xi
  \geq\frac{1}{r(\infty)}\int_{-\infty}^{\infty}
  f_{r(\infty)/5}(V_{r(\infty)/5}(\xi)){\rm d}\xi>0.
\end{equation}
We first claim that $c_\varepsilon$ is nonincreasing in $\varepsilon>0$.
In fact, let $\varepsilon_1>\varepsilon_2>0$,
and $u_1(t,x)=V_{\varepsilon_1}(x-c_{\varepsilon_1}t)$
and $u_2(t,x)=V_{\varepsilon_2}(x-c_{\varepsilon_2}t)$ be
the decreasing traveling wave solution
of \eqref{48} with $\varepsilon$ equals $\varepsilon_1$ and $\varepsilon_2$, respectively.
Noting that $V_{\varepsilon_1}(-\infty)< V_{\varepsilon_2}(-\infty)$,
$V_{\varepsilon_1}(+\infty)< V_{\varepsilon_2}(+\infty)$
and $f_{\varepsilon_1}(u)\leq f_{\varepsilon_2}(u)$.
Since any translation of a wave profile is also a wave profile,
we can always assume that $V_{\varepsilon_1}(x)\leq V_{\varepsilon_2}(x)$,
$\forall x\in\mathbb{R}$. Then the comparison principle implies that
$V_{\varepsilon_1}(x-c_{\varepsilon_1}t)\leq
 V_{\varepsilon_2}(x-c_{\varepsilon_2}t)$, $\forall x\in\mathbb{R}$, $t>0$, and hence,
we have $c_{\varepsilon_1}\leq c_{\varepsilon_2}$.
Similarly, we can further show that
for any small $\varepsilon>0$, $c_\varepsilon\leq c^*(\infty)$.
Thus, $\lim_{\varepsilon\rightarrow 0^+}c_\varepsilon$ exists.
Set $\tilde{c}:=\lim_{\varepsilon\rightarrow 0^+}c_\varepsilon$,
we then have $\tilde{c}\leq c^*(\infty)$.

Recalling that for any $\varepsilon\in(0,r(\infty)/5)$,
$-r(\infty)/5\leq -\varepsilon \leq V_\varepsilon\leq r(\infty)-\varepsilon\leq r(\infty)$.
By \eqref{50} and the first equation of \eqref{49}, a direct computation yields that
there exists a constant $M_1>0$ such that $|V'_\varepsilon|\leq M_1$.
Moreover, differentiating \eqref{49} with respect to $\xi$,
we can get that $|V''_\varepsilon|\leq M_2$ for some $M_2>0$.
Since $c_\varepsilon\rightarrow \tilde{c}$ as $\varepsilon\rightarrow 0^+$,
by the uniform boundedness of $|V'_\varepsilon|$ and $|V''_\varepsilon|$,
there exists a sequence $\varepsilon_n\rightarrow 0$ such that
$V_{\varepsilon_n}\rightarrow \tilde{V}$ in $C^1_{loc}(\mathbb{R})$,
where $\tilde{V}$ satisfies
\begin{equation*}
 -\tilde{c} \tilde{V}'(\xi)=d\left(\int_{\mathbb{R}}J(y)\tilde{V}(\xi-y){\rm d}y
 -\tilde{V}(\xi)\right)+\tilde{V}(\xi)(r(\infty)-\tilde{V}(\xi)).
\end{equation*}
Further, $\tilde{V}$ is nonincreasing on $\mathbb{R}$ and $0\leq\tilde{V}\leq r(\infty)$.
Without loss of generality, we can normalize $V_{\varepsilon_n}$
by $V_{\varepsilon_n}(0)=\frac{r(\infty)}{2}$, it then follows that
$\tilde{V}(0)=\frac{r(\infty)}{2}$. Thus, we have $\tilde{V}(-\infty)=r(\infty)$
and $\tilde{V}(+\infty)=0$. This implies that $\tilde{V}(x-\tilde{c}t)$ is a traveling wave of \eqref{51}
connecting $r(\infty)$ to $0$ with speed $\tilde{c}$,
and hence, $\tilde{c}\geq c^*(\infty)$.
Consequently, $\tilde{c}=c^*(\infty)$.
\end{proof}
\end{lemma}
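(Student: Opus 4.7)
The plan is to sandwich $c_\varepsilon$ between a positive lower bound and the Fisher-KPP minimal speed $c^*(\infty)$, and then pass to the limit via a compactness argument. I will first show $c_\varepsilon$ is monotone nonincreasing in $\varepsilon$ by a comparison principle: for $\varepsilon_1>\varepsilon_2>0$, the ignition nonlinearities satisfy $f_{\varepsilon_1}\le f_{\varepsilon_2}$ and the boundary conditions at $\pm\infty$ for $V_{\varepsilon_1}$ lie strictly below those for $V_{\varepsilon_2}$; after translating so that $V_{\varepsilon_1}\le V_{\varepsilon_2}$ on $\mathbb{R}$, the comparison principle for \eqref{48} applied on moving frames forces $c_{\varepsilon_1}\le c_{\varepsilon_2}$. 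The same type of argument, comparing $V_\varepsilon$ with a monotone KPP wave of speed $c^*(\infty)$ whose end states $r(\infty)$ and $0$ strictly dominate $r(\infty)-\varepsilon$ and $-\varepsilon$, yields $c_\varepsilon\le c^*(\infty)$. Combined with a positive lower bound obtained by integrating the profile equation over $\mathbb{R}$, using the symmetry of $J$ to kill the convolution term, to arrive at $c_\varepsilon\,r(\infty)=\int_{\mathbb{R}}f_\varepsilon(V_\varepsilon)\,d\xi\ge\int_{\mathbb{R}}f_{r(\infty)/5}(V_{r(\infty)/5})\,d\xi>0$, this shows that $\tilde c:=\lim_{\varepsilon\to 0^+}c_\varepsilon$ exists and lies in $(0,c^*(\infty)]$.

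The second half of the proof upgrades the inequality $\tilde c\le c^*(\infty)$ to equality. Since $-\varepsilon\le V_\varepsilon\le r(\infty)-\varepsilon$ uniformly in $\varepsilon$, the profile equation \eqref{49} directly gives a uniform bound on $V'_\varepsilon$ (and, after differentiating once in $\xi$, on $V''_\varepsilon$). Normalizing each $V_\varepsilon$ by the translation $V_\varepsilon(0)=r(\infty)/2$ (possible because $V_\varepsilon$ is strictly decreasing and spans $(-\varepsilon,r(\infty)-\varepsilon)$), an Arzelà--Ascoli argument yields a subsequence $V_{\varepsilon_n}\to\tilde V$ in $C^1_{\mathrm{loc}}(\mathbb{R})$. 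Passing to the limit in \eqref{49}, using $f_\varepsilon(u)\to u(r(\infty)-u)$ locally uniformly, shows $\tilde V$ is a nonincreasing solution of the traveling wave profile equation for \eqref{51} at speed $\tilde c$, with $0\le\tilde V\le r(\infty)$ and $\tilde V(0)=r(\infty)/2$. Monotonicity forces the limits $\tilde V(\pm\infty)$ to exist and be zeros of $u\mapsto u(r(\infty)-u)$, hence $\tilde V(-\infty)=r(\infty)$ and $\tilde V(+\infty)=0$. Thus $\tilde V$ is a monotone KPP front of speed $\tilde c$, so $\tilde c\ge c^*(\infty)$, and the sandwich gives $\tilde c=c^*(\infty)$.

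The step I expect to be most delicate is the compactness/limit passage: one has to rule out the possibility that the limit profile $\tilde V$ collapses to a constant (in which case the end-state argument would be vacuous) and has to verify that the limits at $\pm\infty$ are actually the desired equilibria of the KPP nonlinearity rather than some intermediate value. The normalization $V_\varepsilon(0)=r(\infty)/2$ together with monotonicity is precisely what prevents degeneracy, and the fact that $0$ and $r(\infty)$ are the only zeros of $u(r(\infty)-u)$ on $[0,r(\infty)]$ then pins down the limits. A secondary technical point is justifying the $C^1_{\mathrm{loc}}$ convergence for the nonlocal term $J\ast V_{\varepsilon_n}$, but this follows from the uniform $L^\infty$ bound on $V_{\varepsilon_n}$ and the compact support of $J$ via dominated convergence.
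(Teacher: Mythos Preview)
Your proposal is correct and follows essentially the same approach as the paper: monotonicity of $c_\varepsilon$ in $\varepsilon$ via comparison, the upper bound $c_\varepsilon\le c^*(\infty)$ by comparison with a KPP front, the integral identity $c_\varepsilon\,r(\infty)=\int_{\mathbb{R}}f_\varepsilon(V_\varepsilon)\,d\xi$ for a positive lower bound, and then a compactness/limit passage with the normalization $V_{\varepsilon_n}(0)=r(\infty)/2$ to produce a genuine KPP front at speed $\tilde c$. Your discussion of the end-state identification (via the zeros of $u(r(\infty)-u)$) and of the dominated-convergence argument for the nonlocal term is in fact a bit more explicit than the paper's.
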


\begin{lemma}\label{lower}
Fix a sufficiently small $\varepsilon\in(0,r(\infty)/5)$. Then for any $c>-c^*(\infty)$,
$\underline{V}(\xi):=\max\{V_\varepsilon(\xi),0\}$ is a subsolution of \eqref{38},
i.e., $\underline{V}$ satisfies the following inequality:
\begin{equation}\label{47}
d\left(\int_{\mathbb{R}}J(y)\underline{V}(\xi-y){\rm d}y-\underline{V}(\xi)\right)
-c\underline{V}'(\xi)+\underline{V}(\xi)(r(-\xi)-\underline{V}(\xi))\geq0
\end{equation}
for any $\xi\neq \xi_0$, where $\xi_0$ is the point satisfying $V_\varepsilon(\xi_0)=0$ and
$V_\varepsilon$ fulfills \eqref{49}.
\end{lemma}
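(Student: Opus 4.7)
My plan is to split the verification of \eqref{47} at the crossing point $\xi_0$, and in each region reduce the inequality to the profile equation \eqref{49} satisfied by $V_\varepsilon$, exploiting the translational invariance of traveling waves to arrange the geometry of $r$.

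\emph{Right region $\xi>\xi_0$.} Here $\underline{V}(\xi)=0$ and $\underline{V}'(\xi)=0$, so \eqref{47} collapses to $d\int_{\mathbb{R}}J(y)\underline{V}(\xi-y){\rm d}y\geq 0$, which is trivial since $\underline{V}\geq 0$.

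\emph{Left region $\xi<\xi_0$.} Here $\underline{V}(\xi)=V_\varepsilon(\xi)>0$ and $\underline{V}'(\xi)=V_\varepsilon'(\xi)<0$, and $f_\varepsilon(V_\varepsilon(\xi))=V_\varepsilon(\xi)(r(\infty)-\varepsilon-V_\varepsilon(\xi))$. The key observation is the pointwise bound $\underline{V}(\xi-y)\geq V_\varepsilon(\xi-y)$ for all $y$, which lets me estimate the nonlocal term from below by the corresponding one for $V_\varepsilon$. Substituting the profile equation \eqref{49} to eliminate $d(J\ast V_\varepsilon-V_\varepsilon)(\xi)$, the left-hand side of \eqref{47} is bounded below by
\[
-(c+c_\varepsilon)V_\varepsilon'(\xi)+V_\varepsilon(\xi)\bigl(r(-\xi)-r(\infty)+\varepsilon\bigr).
\]
I would then argue that both terms are nonnegative. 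For the first, since $V_\varepsilon'<0$, I need $c+c_\varepsilon\geq 0$; Lemma \ref{speed} gives $c_\varepsilon\to c^*(\infty)$ as $\varepsilon\to 0^+$, so for all $c>-c^*(\infty)$ this holds provided $\varepsilon$ is chosen sufficiently small. For the second, since $V_\varepsilon>0$, I need $r(-\xi)\geq r(\infty)-\varepsilon$ for every $\xi<\xi_0$. Because $r$ is nondecreasing and continuous with $r(+\infty)=r(\infty)$, there is some $y^\ast=y^\ast(\varepsilon)$ with $r(y)\geq r(\infty)-\varepsilon$ for all $y\geq y^\ast$. I would then use the fact that any translate of $V_\varepsilon$ also solves \eqref{49} to replace $V_\varepsilon$ by a left-translate so that its zero $\xi_0$ satisfies $\xi_0\leq -y^\ast$; then $\xi<\xi_0$ forces $-\xi\geq y^\ast$ and the desired inequality on $r$.

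The main obstacle is precisely this mismatch between the \emph{homogeneous} reaction rate $r(\infty)-\varepsilon$ used to build $V_\varepsilon$ and the \emph{spatially varying} rate $r(-\xi)$ in equation \eqref{38}: on the far right of the wave ($-\xi$ small) the true reaction is much weaker, so the naive substitution would fail. Exploiting translational invariance to push $\xi_0$ far to the left, together with the smallness of $\varepsilon$, is the structural move that resolves this mismatch; both of these are possible thanks to the hypotheses $r(+\infty)=r(\infty)$ (finite and achieved in the limit) and $c>-c^*(\infty)$ (which keeps $c+c_\varepsilon\geq 0$ for small $\varepsilon$). The rest is straightforward calculation, modulo being careful that the pointwise bound $\underline{V}\geq V_\varepsilon$ applies uniformly in $y$ under the convolution.
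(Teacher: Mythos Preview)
Your proposal is correct and follows essentially the same approach as the paper's proof: both split at $\xi_0$, use $\underline{V}\geq V_\varepsilon$ to bound the convolution from below, invoke Lemma~\ref{speed} to secure $c+c_\varepsilon\geq 0$ for small $\varepsilon$, and translate $V_\varepsilon$ so that $r(-\xi_0)\geq r(\infty)-\varepsilon$ (hence $r(-\xi)\geq r(\infty)-\varepsilon$ for all $\xi<\xi_0$). The only cosmetic difference is the order of operations---you substitute \eqref{49} first and then bound the two residual terms, whereas the paper bounds $r(-\xi)$ and $-cV_\varepsilon'$ first and then applies \eqref{49} to reach zero directly.
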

\begin{proof}
According to Lemma \ref{speed}, $\lim_{\varepsilon\rightarrow0^+}c_\varepsilon=c^*(\infty)$.
It then follows that for sufficiently small $\varepsilon\in (0,r(\infty)/5)$, we have
$c_\varepsilon>-c$ due to $c>-c^*(\infty)$.
Let us fix such an $\varepsilon$. Without loss generality,
we can assume that $V_\varepsilon(\xi_0)=0$ and
$r(-\xi_0)\geq r(\infty)-\varepsilon$.
This can be realized by some appropriate translation of $V_\varepsilon$ if necessary.

If $\xi<\xi_0$, $\underline{V}(\xi)=V_\varepsilon(\xi)>0$,
since $r$ is nondecreasing and $V'_\varepsilon(\xi)<0$,
we have
\begin{align*}
  &d\left(\int_{\mathbb{R}}J(y)\underline{V}(\xi-y){\rm d}y-\underline{V}(\xi)\right)
-c\underline{V}'(\xi)+\underline{V}(\xi)(r(-\xi)-\underline{V}(\xi))\\
&\geq d\left(\int_{\mathbb{R}}J(y)V_\varepsilon(\xi-y){\rm d}y-V_\varepsilon(\xi)\right)
-cV_\varepsilon'(\xi)+V_\varepsilon(\xi)(r(-\xi_0)-V_\varepsilon(\xi))\\
&\geq d\left(\int_{\mathbb{R}}J(y)V_\varepsilon(\xi-y){\rm d}y-V_\varepsilon(\xi)\right)
+c_\varepsilon V_\varepsilon'(\xi)+V_\varepsilon(\xi)(r(\infty)-\varepsilon-V_\varepsilon(\xi))\\
&=d\left(\int_{\mathbb{R}}J(y)V_\varepsilon(\xi-y){\rm d}y-V_\varepsilon(\xi)\right)
+c_\varepsilon V_\varepsilon'(\xi)+f_\varepsilon(V_\varepsilon(\xi))\\
&=0, ~~(\text{ by } \eqref{49})
\end{align*}
which shows that \eqref{47} holds for $\xi<\xi_0$.

If $\xi>\xi_0$, $\underline{V}(\xi)=0$, then
\begin{equation*}
d\left(\int_{\mathbb{R}}J(y)\underline{V}(\xi-y){\rm d}y-\underline{V}(\xi)\right)
-c\underline{V}'(\xi)+\underline{V}(\xi)(r(-\xi)-\underline{V}(\xi))
=d\int_{\mathbb{R}}J(y)\underline{V}(\xi-y){\rm d}y\geq0.
\end{equation*}
Hence, \eqref{47} also holds for $\xi>\xi_0$.
\end{proof}

Next, we construct a supersolution for equation \eqref{38}.
\begin{lemma}\label{upper}
Choose $\xi_1>\xi_0$ large enough such that $r(-\xi_1)<0$.
Let $\mu_1>0$ be the solution
of $$d\left(\int_\mathbb{R}J(y)e^{\mu y}{\rm d}y-1\right)+c\mu+r(-\xi_1)=0.$$
Then for any $c>0$, $\overline{V}(\xi):=\min\{r(\infty), r(\infty)e^{-\mu_1(\xi-\xi_1)}\}$
satisfies
\begin{equation}\label{40}
d\left(\int_{\mathbb{R}}J(y)\overline{V}(\xi-y){\rm d}y-\overline{V}(\xi)\right)
-c\overline{V}'(\xi)+\overline{V}(\xi)(r(-\xi)-\overline{V}(\xi))\leq0
\end{equation}
for any $\xi\neq\xi_1$.
\end{lemma}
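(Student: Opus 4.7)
The plan is first to verify existence of $\mu_1$ and to describe the piecewise structure of $\overline{V}$, then check the super-solution inequality in the two pieces $\xi<\xi_1$ and $\xi>\xi_1$ separately. For the existence of $\mu_1$, set $g(\mu)=d\bigl(\int_\mathbb{R} J(y)e^{\mu y}{\rm d}y-1\bigr)+c\mu+r(-\xi_1)$. By the choice of $\xi_1$, $g(0)=r(-\xi_1)<0$, while $g(\mu)\to+\infty$ as $\mu\to\infty$ by the exponential-moment computation already used in Section \ref{ss} (the compact support of $J$ makes all moments finite and $\int J(y)e^{\mu y}{\rm d}y$ grows exponentially). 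Since $g$ is continuous (in fact convex) with $c>0$, a positive root $\mu_1$ exists. Note that $\overline{V}$ equals $r(\infty)$ on $\xi\le\xi_1$ and decays like $r(\infty)e^{-\mu_1(\xi-\xi_1)}$ on $\xi\ge\xi_1$; the derivative exists away from $\xi_1$, which is why the statement excludes $\xi_1$. For $\xi<\xi_1$, I would observe $\overline{V}'(\xi)=0$ and $\overline{V}(\xi)=r(\infty)$; since $\overline{V}\le r(\infty)$ globally, $\int_\mathbb{R} J(y)\overline{V}(\xi-y){\rm d}y\le r(\infty)$, and since $r(-\xi)\le r(\infty)$, each of the two relevant terms of \eqref{40} is individually $\le 0$, so the inequality is immediate.

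For $\xi>\xi_1$ the key observation is the \emph{global} exponential majorant
\begin{equation*}
\overline{V}(\eta)\le r(\infty)e^{-\mu_1(\eta-\xi_1)}\qquad\text{for every }\eta\in\mathbb{R},
\end{equation*}
which is trivial for $\eta\ge\xi_1$ and holds for $\eta\le\xi_1$ because $e^{-\mu_1(\eta-\xi_1)}\ge 1$ there while $\overline{V}(\eta)=r(\infty)$. Setting $\eta=\xi-y$ and integrating against $J$ yields
\begin{equation*}
\int_\mathbb{R} J(y)\overline{V}(\xi-y){\rm d}y\le r(\infty)e^{-\mu_1(\xi-\xi_1)}\int_\mathbb{R} J(y)e^{\mu_1 y}{\rm d}y = \overline{V}(\xi)\int_\mathbb{R} J(y)e^{\mu_1 y}{\rm d}y.
\end{equation*}
Combining with $-c\overline{V}'(\xi)=c\mu_1\overline{V}(\xi)$, dropping the nonpositive term $-\overline{V}(\xi)^2$ from the reaction, and using the monotonicity of $r$ together with $-\xi<-\xi_1$ to get $r(-\xi)\le r(-\xi_1)$, the left-hand side of \eqref{40} is bounded above by
\begin{equation*}
\overline{V}(\xi)\Bigl[d\Bigl(\int_\mathbb{R} J(y)e^{\mu_1 y}{\rm d}y-1\Bigr)+c\mu_1+r(-\xi_1)\Bigr]=0,
\end{equation*}
by the defining relation of $\mu_1$.

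The only mildly subtle point, and the one I expect to be the main obstacle, is the global exponential majorant valid even across the breakpoint $\xi_1$; without it one obtains the convolution bound only on the region where $\xi-y>\xi_1$, which is not enough to invoke the characteristic equation directly. Once the majorant is in hand, the rest reduces to plugging in the definition of $\mu_1$ and using monotonicity of $r$, and the case $\xi<\xi_1$ is immediate from the upper bound $\overline{V}\le r(\infty)$ together with $r(-\xi)\le r(\infty)$.
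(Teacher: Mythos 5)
Your proposal is correct and follows essentially the same route as the paper: existence of $\mu_1$ via $g(0)=r(-\xi_1)<0$, growth at infinity and convexity; the trivial case $\xi<\xi_1$ from $\overline{V}\le r(\infty)$; and for $\xi>\xi_1$ the global bound $\overline{V}(\eta)\le r(\infty)e^{-\mu_1(\eta-\xi_1)}$ fed into the convolution, followed by dropping $-\overline{V}^2$, using $r(-\xi)\le r(-\xi_1)$, and invoking the characteristic equation. The paper states the convolution estimate as $\int_{\mathbb{R}}J(y)\overline{V}(\xi-y)\,{\rm d}y\le\min\bigl\{r(\infty),\,r(\infty)e^{-\mu_1(\xi-\xi_1)}\int_{\mathbb{R}}J(y)e^{\mu_1 y}\,{\rm d}y\bigr\}$ without comment; you have merely made explicit the global exponential majorant underlying it, which is the right observation.
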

\begin{proof}
Let $$h(\mu)=d\left(\int_\mathbb{R}J(y)e^{\mu y}{\rm d}y-1\right)+c\mu+r(-\xi_1).$$
By a direct calculation, we have
\begin{gather*}
  h(0)=r(-\xi_1)<0, ~h(\mu)\rightarrow +\infty \text{ as } \mu\rightarrow\infty,  \\
  h'(0)=c>0, ~h''(\mu)>0, \forall \mu\in\mathbb{R},
\end{gather*}
which implies that there exists $\mu_1>0$ such that $h(\mu_1)=0$.

According to the definition of $\overline{V}(\xi)$ and the assumption of $J$, we have
\begin{equation*}
\int_{\mathbb{R}}J(y)V(\xi-y){\rm d}y\leq \min\left\{r(\infty),
r(\infty)e^{-\mu_1(\xi-\xi_1)}\int_{\mathbb{R}}J(y)e^{\mu_1 y}{\rm d}y\right\}.
\end{equation*}
When $\xi<\xi_1$, $\overline{V}(\xi)=r(\infty)$, and hence, we have
\begin{equation*}
d\left(\int_{\mathbb{R}}J(y)V(\xi-y){\rm d}y-V(\xi)\right)-cV'(\xi)+V(\xi)(r(-\xi)-V(\xi))
\leq r(\infty)(r(-\xi)-r(\infty))\leq0.
\end{equation*}
When $\xi>\xi_1$, $\overline{V}(\xi)=r(\infty)e^{-\mu_1(\xi-\xi_1)}$. It then follows that
\begin{align*}
&d\left(\int_{\mathbb{R}}J(y)V(\xi-y){\rm d}y-V(\xi)\right)-cV'(\xi)+V(\xi)(r(-\xi)-V(\xi))\\
&\leq r(\infty)e^{-\mu_1(\xi-\xi_1)}
\left[d\left(\int_{\mathbb{R}}J(y)e^{\mu_1 y}{\rm d}y-1\right)+c\mu_1+r(-\xi)
-r(\infty)e^{-\mu_1(\xi-\xi_1)}\right]\\
&\leq r(\infty)e^{-\mu_1(\xi-\xi_1)}
\left[d\left(\int_{\mathbb{R}}J(y)e^{\mu_1 y}{\rm d}y-1\right)+c\mu_1+r(-\xi_1)\right]\\
&=0.
\end{align*}
This completes the proof of \eqref{40}.
\end{proof}

Let $BC(\mathbb{R},\mathbb{R})$ be the space of all
bounded and continuous functions from $\mathbb{R}$ to $\mathbb{R}$,
and $BC^+=\{v\in BC(\mathbb{R},\mathbb{R}): v(x)\geq0 \text{ for all } x\in\mathbb{R}\}$.
For any $v$, $\tilde{v}\in BC(\mathbb{R},\mathbb{R})$, we denote $v\geq \tilde{v}$ or
$\tilde{v}\leq v$ if $v-\tilde{v}\in BC^+$. According to Lemmas \ref{lower} and \ref{upper},
we can easily verify that $\underline{V}\leq \overline{V}$ on $\mathbb{R}$.
Define the profile set
$$\Theta=\{v\in BC(\mathbb{R},\mathbb{R}): \underline{V}\leq v\leq \overline{V}\}$$
and the operator $H: \Theta\rightarrow C(\mathbb{R},\mathbb{R})$ by
\begin{equation*}
  H(V)(\xi)=\beta V(\xi)+d\int_{\mathbb{R}}J(y)V(\xi-y){\rm d}y-dV(\xi)+V(\xi)(r(-\xi)-V(\xi)),
\end{equation*}
where $\beta=d+2r(\infty)-r(-\infty)>0$. Then \eqref{38} can be rewritten as
\begin{equation}\label{41}
 cV'(\xi)=-\beta V(\xi)+H(V)(\xi).
\end{equation}
Clearly, if $v\in\Theta$, then $0\leq v(\xi)\leq r(\infty)$ for all $\xi\in\mathbb{R}$.
Let us introduce the following integral equation
\begin{equation}\label{42}
  V(\xi)=\frac{1}{c}\int_{-\infty}^{\xi}e^{-\frac{\beta}{c}(\xi-z)}H(V)(z){\rm d}z,
\end{equation}
which is well defined for $V\in\Theta$. Moreover, it is easy to see that
the solution of \eqref{42} is $C^1$ and satisfies \eqref{41}.
Thus, the existence of monotone solutions of \eqref{38}-\eqref{39} reduces to
that of the fixed point of the operator $F:\Theta\rightarrow C(\mathbb{R},\mathbb{R})$
defined as follows
\begin{equation}\label{46}
F(V)(\xi)=\frac{1}{c}\int_{-\infty}^{\xi}e^{-\frac{\beta}{c}(\xi-z)}H(V)(z){\rm d}z.
\end{equation}

Now we summarize some properties of the operator $F$.
\begin{lemma}\label{pro}
$F$ is a nondecreasing operator and maps $\Theta$ to $\Theta$.
Moreover, if $V\in\Theta$ is nonincreasing,
then $F(V)(\xi)$ is nonincreasing with respect to $\xi$.
\end{lemma}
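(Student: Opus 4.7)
The plan is to deduce all three assertions from one key fact: pointwise monotonicity of $H$ in its $V$-argument on $\Theta$. For fixed $\xi$ and $V\in[0,r(\infty)]$,
\[
\frac{\partial}{\partial V}\bigl[\beta V - dV + V(r(-\xi)-V)\bigr]=\beta-d+r(-\xi)-2V\ge\beta-d+r(-\infty)-2r(\infty)=0
\]
by the choice $\beta=d+2r(\infty)-r(-\infty)$. Since the convolution $d\int J(y)V(\xi-y)\,dy$ is linear with nonnegative kernel $J$, it follows that $V_1\le V_2$ in $\Theta$ implies $H(V_1)\le H(V_2)$ pointwise; together with the nonnegativity of $e^{-\beta(\xi-z)/c}$ in \eqref{46}, this immediately gives $F(V_1)\le F(V_2)$, so $F$ is nondecreasing.

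For the invariance $F(\Theta)\subseteq\Theta$, I would establish the two endpoint bounds $F(\underline{V})\ge\underline{V}$ and $F(\overline{V})\le\overline{V}$; monotonicity of $F$ then propagates these to every $V\in\Theta$. Rewriting the supersolution inequality \eqref{40} in the form $H(\overline{V})(z)\le c\overline{V}'(z)+\beta\overline{V}(z)$ for $z\ne\xi_1$, and noting that
\[
\tfrac{1}{c}e^{-\beta(\xi-z)/c}\bigl[c\overline{V}'(z)+\beta\overline{V}(z)\bigr]=\tfrac{d}{dz}\!\bigl[e^{-\beta(\xi-z)/c}\overline{V}(z)\bigr]
\]
almost everywhere, one integrates over $(-\infty,\xi]$ to obtain $F(\overline{V})(\xi)\le\overline{V}(\xi)$, the boundary contribution at $-\infty$ vanishing because $\overline{V}$ is bounded and $\beta,c>0$. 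The argument for $\underline{V}$ is strictly analogous, using \eqref{47}. The real subtlety—and what I regard as the main obstacle—is the conversion from the pointwise super/subsolution inequalities to these integral bounds, since $\overline{V}$ has a corner at $\xi_1$ and $\underline{V}=\max\{V_\varepsilon,0\}$ has one at $\xi_0$; it is legitimate only because both functions are absolutely continuous (being min/max of two $C^1$ functions), so the fundamental theorem of calculus applies directly.

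Finally, for the monotonicity-preserving claim, I would argue by shift. Changing variables $z\mapsto z+h$ in \eqref{46} yields
\[
F(V)(\xi+h)=\frac{1}{c}\int_{-\infty}^{\xi}e^{-\beta(\xi-z)/c}H(V)(z+h)\,dz,
\]
so it suffices to show $H(V)(\cdot+h)\le H(V)(\cdot)$ on $\mathbb{R}$ for $h>0$ when $V\in\Theta$ is nonincreasing. Writing $V_h(\eta):=V(\eta+h)$ and $r_h(\cdot):=r(\cdot-h)$, a direct computation gives $H(V)(\xi+h)=H[r_h](V_h)(\xi)$, where $H[\rho]$ denotes the operator obtained by replacing $r$ by $\rho$ in the definition of $H$. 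Since $V$ is nonincreasing we have $V_h\le V$, and since $r$ is nondecreasing we have $r_h(-\xi)=r(-\xi-h)\le r(-\xi)$. Applying the pointwise $V$-monotonicity of $H[r_h]$ (the same $\beta$ still works, as $r_h$ has the same range as $r$) and then the obvious monotonicity of $H$ in its $r$-slot (which uses $V\ge0$), we chain
\[
H(V)(\xi+h)=H[r_h](V_h)(\xi)\le H[r_h](V)(\xi)\le H[r](V)(\xi)=H(V)(\xi),
\]
and the shifted integral representation above delivers $F(V)(\xi+h)\le F(V)(\xi)$, completing the proposal.
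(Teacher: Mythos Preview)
Your proposal is correct and follows essentially the same route as the paper. The paper proves monotonicity of $H$ by a direct difference computation (yielding the factor $\beta-d+r(-\xi)-(V+\tilde V)$), handles the corners of $\overline V$ and $\underline V$ by splitting the integral at $\xi_1$ and $\xi_0$, and establishes the monotonicity-preserving property via a direct expansion of $H(V)(\xi+h)-H(V)(\xi)$; your derivative argument, absolute-continuity justification, and two-step chain $H[r_h](V_h)\le H[r_h](V)\le H[r](V)$ are simply cleaner rephrasings of the same computations.
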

\begin{proof}
For any $V$, $\tilde{V}\in\Theta$ with $V\geq\tilde{V}$, we have
\begin{align*}
  H(V)(\xi)-H(\tilde{V})(\xi)
  =&[\beta-d+r(-\xi)-(V(\xi)+\tilde{V}(\xi))](V(\xi)-\tilde{V}(\xi))\\
  &+d\int_{\mathbb{R}}J(y)[V(\xi-y)-\tilde{V}(\xi-y)]{\rm d}y\\
  \geq&[r(-\xi)-r(-\infty)+2r(\infty)-(V(\xi)+\tilde{V}(\xi))](V(\xi)-\tilde{V}(\xi))\\
  \geq&0,
\end{align*}
which implies that $F(V)(\xi)\geq F(\tilde{V})(\xi)$, $\forall\xi\in\mathbb{R}$.
It then follows that
\begin{equation}\label{420}
F(\underline{V})(\xi)\leq F(V)(\xi)\leq F(\overline{V})(\xi)
\end{equation}
for all $V\in \Theta$ and all $\xi\in\mathbb{R}$.
On the other hand, by Lemma \ref{upper}, we have
\begin{equation}\label{43}
\begin{split}
  F(\overline{V})(\xi)=&\frac{1}{c}\int_{-\infty}^{\xi}
  e^{-\frac{\beta}{c}(\xi-z)}H(\overline{V})(z){\rm d}z\\
  \leq& \frac{1}{c}\left\{\left(\int_{-\infty}^{\xi_1}
  +\int_{\xi_1}^{\xi}\right)e^{-\frac{\beta}{c}(\xi-z)}
  [c\overline{V}'(z)+\beta \overline{V}(z)]{\rm d}z\right\}\\
  =&\overline{V}(\xi),
\end{split}
\end{equation}
and by Lemma \ref{lower}, we have
\begin{equation}\label{44}
\begin{split}
  F(\underline{V})(\xi)=&\frac{1}{c}\int_{-\infty}^{\xi}
  e^{-\frac{\beta}{c}(\xi-z)}H(\underline{V})(z){\rm d}z\\
  \geq& \frac{1}{c}\left\{\left(\int_{-\infty}^{\xi_0}
  +\int_{\xi_0}^{\xi}\right)e^{-\frac{\beta}{c}(\xi-z)}
  [c\underline{V}'(z)+\beta \underline{V}(z)]{\rm d}z\right\}\\
  =&\underline{V}(\xi).
\end{split}
\end{equation}
Combining \eqref{420}-\eqref{44}, we obtain $F(\Theta)\subseteq \Theta$.

If $V\in\Theta$ is nonincreasing, then for all $\xi\in\mathbb{R}$ and any $h>0$, we have
\begin{align*}
 &H(V)(\xi+h)-H(V)(\xi)\\
 &=[\beta-d-V(\xi+h)-V(\xi)](V(\xi+h)-V(\xi))
 +r(-\xi-h)V(\xi+h)-r(-\xi)V(\xi)\\
 &\ \ \ \ +d\int_{\mathbb{R}}J(y)[V(\xi+h-y)-V(\xi-y)]{\rm d}y\\
 &\leq[2r(\infty)-V(\xi+h)-V(\xi)+r(-\xi)-r(-\infty)](V(\xi+h)-V(\xi))\\
 &\leq0,
\end{align*}
which further leads to
\begin{align*}
F(V)(\xi+h)=&\frac{1}{c}\int_{-\infty}^{\xi+h}e^{-\frac{\beta}{c}(\xi+h-z)}H(V)(z){\rm d}z\\
=&\frac{1}{c}\int_{0}^{\infty}e^{-\frac{\beta}{c}z}H(V)(\xi+h-z){\rm d}z\\
\leq&\frac{1}{c}\int_{0}^{\infty}e^{-\frac{\beta}{c}z}H(V)(\xi-z){\rm d}z\\
=&\frac{1}{c}\int_{-\infty}^{\xi}e^{-\frac{\beta}{c}(\xi-z)}H(V)(z){\rm d}z=F(V)(\xi).
\end{align*}
We then complete the proof.
\end{proof}

Now we are in a position to prove our main result in this section.
\begin{theorem}
For any given $c>0$, \eqref{36} admits a nondecreasing positive solution $U(\xi)$
satisfying \eqref{37}. In other words, \eqref{1} has a  nondecreasing
forced traveling wave connecting
$0$ and $r(\infty)$ with the wave speed at which the environment is shifting.
If,  in addition, $r$ is strictly increasing, then the wave profile is monotone increasing.
\end{theorem}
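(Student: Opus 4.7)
The plan is to construct a nonincreasing solution $V^*$ of \eqref{38}--\eqref{39} via monotone iteration between the ordered sub/supersolution pair of Lemmas \ref{lower}--\ref{upper}, and then recover $U$ by $U(\xi):=V^*(-\xi)$. Concretely, I would start from $V_0:=\overline{V}$ and iterate $V_{n+1}:=F(V_n)$, where $F$ is the integral operator in \eqref{46}. Since $F(\overline{V})\le\overline{V}$ (shown in the proof of Lemma \ref{pro}) and $F$ is monotone and preserves nonincreasing profiles, induction yields a sequence $\{V_n\}\subset\Theta$ that is decreasing in $n$, bounded below by $\underline{V}$, with each $V_n$ nonincreasing in $\xi$. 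The pointwise limit $V^*(\xi):=\lim_{n\to\infty} V_n(\xi)$ therefore exists in $\Theta$ and is itself nonincreasing in $\xi$. Dominated convergence applied to \eqref{46} gives $V^*=F(V^*)$, i.e., $V^*$ solves the integral equation \eqref{42}; since solutions of \eqref{42} are $C^1$ and equivalent to solutions of \eqref{41}, $V^*$ classically solves \eqref{38}.

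Next I would verify the boundary conditions \eqref{39}. The limit $V^*(+\infty)=0$ is immediate from the sandwich $0\le V^*\le\overline{V}$ together with $\overline{V}(+\infty)=0$. For $-\infty$, monotonicity and boundedness imply that $\alpha:=V^*(-\infty)$ exists, and the subsolution only yields $\alpha\ge r(\infty)-\varepsilon>0$. To upgrade this to $\alpha=r(\infty)$, I would use a translation-compactness argument: for any sequence $\xi_k\to-\infty$ set $W_k(\xi):=V^*(\xi+\xi_k)$; these are uniformly bounded with uniformly bounded derivative (from \eqref{41}), so by Arzel\`a--Ascoli a subsequence converges in $C_{\text{loc}}(\mathbb R)$ to a function $W_\infty$, which as a monotone-function translate at $-\infty$ is the constant $\alpha$. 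Passing to the limit in the shifted version of \eqref{38} and using $r(-\xi-\xi_k)\to r(\infty)$ uniformly on compacts yields $0=\alpha(r(\infty)-\alpha)$, whence $\alpha\in\{0,r(\infty)\}$ and so $\alpha=r(\infty)$.

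Setting $U(\xi):=V^*(-\xi)$ then gives a nondecreasing $C^1$ solution of \eqref{36} with the correct limits \eqref{37}. The strict inequality $0<U<r(\infty)$ on $\mathbb R$ follows from the strong maximum principle for nonlocal operators already invoked in the excerpt. For the strict monotonicity claim under the extra assumption that $r$ is strictly increasing, I would argue by contradiction: if $U\equiv U_0\in(0,r(\infty))$ on a maximal interval $[a,b]$, then on its interior \eqref{36} reduces to
\begin{equation*}
d\bigl(J\ast U-U\bigr)(\xi)=U_0\bigl(U_0-r(\xi)\bigr),
\end{equation*}
and by enlarging the plateau if necessary (using monotonicity and the finite support of $J$ so that $J\ast U$ is constant on a sub-interval) one forces $r$ to be constant on a nondegenerate interval, contradicting strict monotonicity of $r$.

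The main obstacle is the verification $V^*(-\infty)=r(\infty)$: unlike the $+\infty$ limit, this does not follow from sandwiching because $\underline{V}(-\infty)=r(\infty)-\varepsilon$ only. The translation-compactness step above is the crucial ingredient, and it relies essentially on the fact that the heterogeneity $r(-\xi)$ stabilizes to $r(\infty)$ at $-\infty$ and that the only constant solutions of the limiting autonomous equation are $0$ and $r(\infty)$; the lower bound $\alpha\ge r(\infty)-\varepsilon>0$ then rules out $\alpha=0$.
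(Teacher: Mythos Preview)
Your construction via monotone iteration from $\overline{V}$, the dominated-convergence passage to the fixed point, and the sandwich for $V^*(+\infty)=0$ coincide exactly with the paper's proof. The two places where you diverge are both sound but worth contrasting.

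For $V^*(-\infty)=r(\infty)$, your translation--compactness argument works, but the paper bypasses Arzel\`a--Ascoli entirely: since $V^*=F(V^*)$ and $V^*$ is monotone with limit $A$, one computes directly $\lim_{\xi\to-\infty}H(V^*)(\xi)=\beta A+A(r(\infty)-A)$ and then applies L'H\^opital's rule to the integral representation $F(V^*)(\xi)=\frac{1}{c}\int_{-\infty}^{\xi}e^{-\beta(\xi-z)/c}H(V^*)(z)\,dz$ to obtain $A=A+A(r(\infty)-A)/\beta$, forcing $A\in\{0,r(\infty)\}$. This is more elementary and avoids any compactness machinery; your approach, on the other hand, is more robust and would transfer to settings where an explicit integral inversion is unavailable.

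For the strict-monotonicity step your sketch has a loose end: the phrase ``enlarging the plateau if necessary'' is unjustified (a maximal plateau cannot be enlarged, and there is no reason it should have length exceeding $2L$ so that $J\ast U$ becomes constant on a sub-interval). The paper's argument avoids this: pick two interior points $\xi_2^+<\xi_3^-$ of the plateau and subtract \eqref{36} at these points to get
\[
d\int_{-L}^{L}J(y)\bigl[U(\xi_2^+-y)-U(\xi_3^--y)\bigr]\,dy+U(\xi_2^+)\bigl[r(\xi_2^+)-r(\xi_3^-)\bigr]=0,
\]
where the first term is $\le 0$ by monotonicity of $U$ and the second is strictly negative since $r$ is strictly increasing and $U>0$. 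Alternatively, your own reduced identity $d(J\ast U)(\xi)=dU_0+U_0(U_0-r(\xi))$ already yields a contradiction without any ``enlarging'': the left side is nondecreasing in $\xi$ (because $U$ is) while the right side is strictly decreasing (because $r$ is strictly increasing), so they cannot agree on a nondegenerate interval.
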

\begin{proof}
Define the iterations:
\begin{equation*}
  V_1=F(\overline{V}), ~V_{n+1}=F(V_n), \forall n\geq1.
\end{equation*}
Since $\overline{V}\in\Theta$ is nonincreasing on $\mathbb{R}$,
by Lemma \ref{pro}, we can conclude that $V_n\in \Theta$ and $V_n(\xi)$
is nonincreasing with respect to $\xi$ for each fixed $n=1,2,\cdot\cdot\cdot$, and
\begin{equation*}
  \underline{V}(\xi)\leq V_{n+1}(\xi)\leq V_n(\xi)\leq \overline{V}(\xi), ~
  \forall\xi\in\mathbb{R}, n\geq1.
\end{equation*}
Then the pointwise limit of the sequence $\{V_n\}$ exists, denoted by $V$,
i.e., for every $\xi\in\mathbb{R}$, $V(\xi)=\lim_{n\rightarrow\infty}V_n(\xi)$.
Obviously, $V(\xi)$ is a nonincreasing and nonnegative function defined on $\mathbb{R}$ and
\begin{equation}\label{45}
 \underline{V}(\xi)\leq V(\xi)\leq \overline{V}(\xi).
\end{equation}
Moreover, $H(V_n)$ converges pointwise to $H(V)$.

We now show $V$ is a fixed point of $F$. Since
\begin{equation*}
  |H(V_n)|\leq [\beta+2d+r(\infty)+\max\{r(\infty),-r(-\infty)\}]r(\infty), \forall n\geq1,
\end{equation*}
by \eqref{46} and the Lebesgue's dominated convergence theorem, we have
\begin{align*}
  V(\xi)=&\lim_{n\rightarrow\infty}V_{n+1}(\xi)=\lim_{n\rightarrow\infty}F(V_{n})(\xi)\\
  =&\frac{1}{c}
  \lim_{n\rightarrow\infty}\int_{-\infty}^{\xi}e^{-\frac{\beta}{c}(\xi-z)}H(V_n)(z){\rm d}z\\
  =&\frac{1}{c}\int_{-\infty}^{\xi}e^{-\frac{\beta}{c}(\xi-z)}H(V)(z){\rm d}z=F(V)(\xi).
\end{align*}
It easily follows that $V\in C^1(\mathbb{R})$ satisfies \eqref{38}.
Next we prove that $V$ meets \eqref{39}.
Clearly, it follows from \eqref{45} as well as
\begin{equation*}
  \lim_{\xi\rightarrow\infty}\underline{V}(\xi)
  =\lim_{\xi\rightarrow\infty}\overline{V}(\xi)=0,
\end{equation*}
that
\begin{equation*}
\lim_{\xi\rightarrow\infty}V(\xi)=0.
\end{equation*}
Note that $V(\xi)$ is nonincreasing in $\mathbb{R}$ and
$0\leq \underline{V}(\xi)\leq V(\xi)\leq \overline{V}(\xi)\leq r(\infty)$.
Therefore, $A:=\lim_{\xi\rightarrow-\infty}V(\xi)$ exists. Then
\begin{align*}
  &\lim_{\xi\rightarrow-\infty}H(V)(\xi)\\
  &=\lim_{\xi\rightarrow-\infty}\left[\beta V(\xi)
  +d\int_{-L}^{L}J(y)V(\xi-y){\rm d}y-dV(\xi)+V(\xi)(r(-\xi)-V(\xi))\right]\\
  &=\beta A+A(r(\infty)-A),
\end{align*}
where $[-L,L]=supp(J)$.
Applying the L'H\v{o}pital's rule, we can get that
\begin{align*}
A=&\lim_{\xi\rightarrow-\infty}V(\xi)=\lim_{\xi\rightarrow-\infty}F(V)(\xi)\\
=&\frac{1}{c}\lim_{\xi\rightarrow-\infty}
\int_{-\infty}^{\xi}e^{-\frac{\beta}{c}(\xi-z)}H(V)(z){\rm d}z\\
=&\frac{1}{c}\lim_{\xi\rightarrow-\infty}\frac{H(V)(\xi)}{\beta/c}\\
=&A+\frac{A(r(\infty)-A)}{\beta},
\end{align*}
which derives that $A=0$ or $A=r(\infty)$.
However, if $A=0$, by the monotonicity of $V$, there must be $V\equiv0$,
which contradicts to $V\geq\underline{V}$ and the definition of $\underline{V}$.
Thus, we have $$\lim_{\xi\rightarrow-\infty}V(\xi)=r(\infty).$$
Now we can obtain the desired conclusion by using the relation $U(\xi)=V(-\xi)$.

In the case where $r$ is strictly increasing, we assume, by contradiction, that there exist
$\xi_2<\xi_3$ such that $U(\xi_2)=U(\xi_3)$. Since $U(\xi)$ is nondecreasing, we have
\begin{equation*}
  U'(\xi)=0 \text{ and }U(\xi)\equiv U(\xi_2), \forall\xi\in[\xi^+_2,\xi^-_3],
\end{equation*}
where $\xi_2<\xi^+_2<\xi^-_3<\xi_3$.  In view of  \eqref{36}, it then follows that  
\begin{equation}\label{53}
  d\int^{L}_{-L}J(y)[U(\xi^+_2-y)-U(\xi^-_3-y)]{\rm d}y
  +U(\xi^+_2)[r(\xi^+_2)-r(\xi^-_3)]=0,
\end{equation}
where $[-L,L]=supp(J)$.
However, since $0<U<r(\infty)$ is nondecreasing and $r$ is strictly increasing, we obtain
\begin{gather*}
  \int^{L}_{-L}J(y)[U(\xi^+_2-y)-U(\xi^-_3-y)]{\rm d}y\leq 0,\\
  U(\xi^+_2)[r(\xi^+_2)-r(\xi^-_3)]<0,
\end{gather*}
which contradicts  \eqref{53}. This shows that $U$ is monotone increasing.
\end{proof}

\

\noindent
{\bf Acknowledgments.}  We are grateful to Dr. Jian Fang
for his helpful discussion on the construction of the subsolution for equation \eqref{38}.
W.-T. Li was partially supported by NSF of China (11671180, 11731005)
and FRFCU (lzujbky-2017-ct01).
J.-B. Wang would like to thank the China Scholarship Council (201606180060)
for financial support during the period of his overseas study and
to express his gratitude to the Department of Mathematics and Statistics,
Memorial University of Newfoundland for its kind hospitality.
X.-Q. Zhao was partially supported by the NSERC of Canada.


\end{document}